\theoremstyle{plain}
\newtheorem{lemma}{Lemma} 
\author[Ma, Y.]{Yutao Ma}
\title[SUPPLEMENTARY MATERIAL]{SUPPLEMENTARY MATERIAL: Importance Sampling for the Extremal Eigenvalue of $\beta$-Jacobi ensemble}  
\date{}
\address{Yutao MA\\ School of Mathematical Sciences $\&$ Laboratory  of Mathematics and Complex Systems of Ministry of Education, Beijing Normal University, 100875 Beijing, China.} 
\thanks{The research of Yutao Ma was supported in part by NSFC 12171038, 11871008 and 985 Projects.}
\email{mayt@bnu.edu.cn}
\author[Wang, S.]{Siyu Wang}
\address{Siyu Wang\\ School of Mathematical Sciences $\&$ Laboratory of Mathematics and Complex Systems of Ministry of Education, Beijing Normal University, 100875 Beijing, China.}
\email{wang\_siyu@mail.bnu.edu.cn}
\begin{document}

\maketitle
This supplementary material provides the proofs of Lemmas $3 \sim  8$ in Section 3.

\section*{Proof of Technical lemmas}
For completeness, we first restate Lemma 1 and Lemma 2.

\begin{lemma}\label{loglem}
For $\alpha_1>1 / 2$ and $0<\alpha_2<1 / 2$, we have
\begin{align*}
	&\log (1-z) \geq-z-\alpha_1 z^2, \quad \text { if } z \in\left(-1,1-\frac{1}{2 \alpha_1}\right); \\
	&\log (1+z) \leq z-\alpha_2 z^2, \quad \text { if } z \in\left(-1, \frac{1}{2 \alpha_2}-1\right). 
\end{align*}  
\end{lemma}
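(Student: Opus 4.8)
The plan is to treat each inequality as the assertion that an explicitly constructed auxiliary function is nonnegative on the indicated interval, and to establish this by a first-derivative sign analysis anchored at $z=0$ (where both functions, together with their defining inequalities, vanish).

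For the first inequality I would set $f(z) = \log(1-z) + z + \alpha_1 z^2$ on $\bigl(-1, 1-\tfrac{1}{2\alpha_1}\bigr)$. Since $\alpha_1 > 1/2$ we have $\tfrac{1}{2\alpha_1} < 1$, so the right endpoint $1-\tfrac{1}{2\alpha_1}$ lies in $(0,1)$ and $1-z > 0$ throughout the interval, making $f$ well defined and smooth there. The key computation is the factorization
\[
f'(z) = -\frac{1}{1-z} + 1 + 2\alpha_1 z = \frac{2\alpha_1\, z\,\bigl((1-\tfrac{1}{2\alpha_1}) - z\bigr)}{1-z},
\]
in which the right endpoint of the interval appears as a root of the numerator. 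On $\bigl(-1, 1-\tfrac{1}{2\alpha_1}\bigr)$ both $1-z$ and $(1-\tfrac{1}{2\alpha_1}) - z$ are positive, so $f'(z)$ has the same sign as $z$; hence $f$ decreases on $(-1,0]$ and increases on $[0, 1-\tfrac{1}{2\alpha_1})$, so its minimum is $f(0)=0$. This yields $f \geq 0$, which is the claimed bound.

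The second inequality is handled symmetrically by putting $g(z) = z - \alpha_2 z^2 - \log(1+z)$ on $\bigl(-1, \tfrac{1}{2\alpha_2}-1\bigr)$; now $0<\alpha_2<1/2$ makes the right endpoint positive and $1+z>0$ on the interval. One computes
\[
g'(z) = 1 - 2\alpha_2 z - \frac{1}{1+z} = \frac{2\alpha_2\, z\,\bigl((\tfrac{1}{2\alpha_2}-1) - z\bigr)}{1+z},
\]
and again on the stated interval the factors $1+z$ and $(\tfrac{1}{2\alpha_2}-1) - z$ are both positive, so $\operatorname{sign} g'(z) = \operatorname{sign} z$, forcing the minimum of $g$ at $z=0$ with $g(0)=0$. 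Hence $g \geq 0$ on the interval, which is the second claim.

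There is no genuine obstacle here; the only point requiring care is the bookkeeping in factoring $f'$ and $g'$ so that the endpoints $1-\tfrac{1}{2\alpha_1}$ and $\tfrac{1}{2\alpha_2}-1$ emerge as roots — this is precisely what makes the sign analysis transparent — and verifying that the hypotheses $\alpha_1 > 1/2$ and $0<\alpha_2<1/2$ are exactly what guarantee that the auxiliary factors retain a constant sign across the whole interval.
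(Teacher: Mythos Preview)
Your argument is correct: the factorizations of $f'$ and $g'$ are accurate, the sign analysis on the stated intervals is clean, and the conclusion that the minimum occurs at $z=0$ with value $0$ is exactly what is needed. Note that the paper does not actually supply a proof of this lemma---it is merely restated for completeness, with the proofs in the supplementary material beginning only at Lemma~\ref{Ann}---so there is no approach to compare against; your elementary calculus argument is the natural one and would be entirely acceptable here.
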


\begin{lemma}\label{betalem}
Let $X$ be a Beta-distributed random variable with parameter $a$ and $b,$ denoted by $X\sim {\rm Beta}(a, b),$ that equivalently means the density function of $X$ is $$\frac{\Gamma(a+b)}{\Gamma(a)\Gamma(b)}x^{a-1} (1-x)^{b-1}1_{0<x<1}.$$ The following three assertions hold true.  
\begin{itemize}
	\item [(i)] The $k$-th moment of $X$ is
\begin{align*}
	\mathbb E \left[X^k\right]  = \prod_{i=1}^{k} \frac{ a +i-1}{a+b+i-1}.
\end{align*}  
	\item [(ii)] The variance of $X$ and $X^2$ are
\begin{align*}
\operatorname{Var}[X] & = \frac{a b}{(a+b)^2(a+b+1)}, \\
\operatorname{Var}\left[X^2\right] & =\frac{a(a+1)\left(4 a b(a+b+3)+6 b^2+b\right)}{(a+b)^2(a+b+1)^2(a+b+2)(a+b+3)}.
\end{align*}  
	\item[(iii)] For any $0 < \varepsilon < 1/2,$
$$
\mathbb P(|X- \mathbb E[X]|>\varepsilon) \leq 4 \exp \left\{-\frac{\varepsilon^2}{128} \cdot \frac{a^3+b^3}{a b}\right\}.
$$
\end{itemize}
\end{lemma}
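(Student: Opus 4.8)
\emph{Proof plan.} The approach I would take rests on the beta--gamma algebra. Let $G_1\sim\mathrm{Gamma}(a,1)$ and $G_2\sim\mathrm{Gamma}(b,1)$ be independent, so that $X\stackrel{d}{=}G_1/(G_1+G_2)$ and $\mathbb E[X]=\mu:=a/(a+b)$, consistently with part (i). Since $s\mapsto s/(s+u)$ increases in $s$ and decreases in $u$, on the event $\{G_1<a+\varepsilon(a+b),\ G_2>b-\varepsilon(a+b)\}$ one has $X<\bigl(a+\varepsilon(a+b)\bigr)/(a+b)=\mu+\varepsilon$, and the symmetric bound holds for the lower deviation; hence
\[
\{|X-\mu|>\varepsilon\}\ \subseteq\ \{|G_1-a|\ge \varepsilon(a+b)\}\ \cup\ \{|G_2-b|\ge \varepsilon(a+b)\}.
\]
The problem thus reduces to four one--sided Gamma tail estimates, and summing them produces the prefactor $4$.

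For each such tail I would run the Chernoff bound from $\mathbb E[e^{\lambda G_1}]=(1-\lambda)^{-a}$, valid for all $\lambda<1$ so that both signs are available, optimize in $\lambda$, and then invoke Lemma \ref{loglem}: its two inequalities are exactly what bound the Chernoff exponent from below, $\log(1+z)\le z-\alpha_2 z^{2}$ for the upper tail and $\log(1-z)\ge -z-\alpha_1 z^{2}$ for the lower tail. With $t=\varepsilon(a+b)$ this gives bounds of the shape $\exp\{-c\,\varepsilon^{2}(a+b)^{2}/a\}$ and $\exp\{-c\,\varepsilon^{2}(a+b)^{2}/b\}$. To reach the stated form it then suffices to use the elementary identities $a(a+b)^{2}-(a^{3}+b^{3})=b(a+b)(2a-b)$ and $b(a+b)^{2}-(a^{3}+b^{3})=a(a+b)(2b-a)$, so that each exponent dominates a constant multiple of $\varepsilon^{2}(a^{3}+b^{3})/(ab)$; choosing the constant conservatively collapses the four terms into $4\exp\{-\tfrac{\varepsilon^{2}}{128}\cdot\tfrac{a^{3}+b^{3}}{ab}\}$. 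Parts (i)--(ii) are used only to identify $\mu$ and, as a sanity check, to anticipate the correct order in the exponent via $\mathrm{Var}[X]\asymp ab/(a^{3}+b^{3})$.

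The step I expect to be genuinely delicate is the range of validity of the quadratic Gamma bounds produced this way: the optimal Chernoff parameter is of order $t/a$ (resp.\ $t/b$), and feeding it into Lemma \ref{loglem} is legitimate only while $t=\varepsilon(a+b)$ is not large relative to the shape parameter, since Lemma \ref{loglem}(ii) requires the argument to stay below $\tfrac1{2\alpha_2}-1$. One therefore has to split into cases: in the ``bulk'' regime $\varepsilon(a+b)\lesssim a\wedge b$ keep the quadratic exponent, and in the complementary regime retain instead the linear term $-ct$ of the Chernoff exponent and check, now using the hypothesis $\varepsilon<1/2$, that it still exceeds $\tfrac{\varepsilon^{2}}{128}\cdot\tfrac{a^{3}+b^{3}}{ab}$; for each tail of $X$ one moreover has the freedom to lean either on the always sub-Gaussian lower tail of one of the gammas or on the (only conditionally sub-Gaussian) upper tail of the other, and taking the better of the two is what lets the case analysis close. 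Organizing this split and tracking constants is the bulk of the work. As a fallback for deviations that push $X$ toward an endpoint, one can instead apply Markov's inequality to $X^{k}$ — or to $(1-X)^{k}$, using $1-X\sim\mathrm{Beta}(b,a)$ — with the moment formula of part (i), and optimize over the integer $k$.
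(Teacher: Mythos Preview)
The supplement does not prove Lemma~\ref{betalem}; it restates Lemmas~1 and~2 ``for completeness'' and only begins giving proofs from Lemma~3 onward, so there is no in-paper argument to compare against.

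Your beta--gamma/Chernoff scheme for (iii) is the natural one, and you have correctly located the obstacle at $\varepsilon(a+b)\gg\min(a,b)$. The gap is that this obstacle cannot be overcome, because (iii) is \emph{false} as stated: for $a=1$, $b=1000$, $\varepsilon=\tfrac{1}{4}$ one has $\mathbb P(|X-\mu|>\tfrac{1}{4})=(\tfrac{3}{4}-\tfrac{1}{1001})^{1000}\approx e^{-289}$, whereas the right-hand side is $4\exp\{-\tfrac{1}{2048}\cdot\tfrac{1+10^{9}}{1000}\}\approx 4e^{-488}$. More generally, fixing $\min(a,b)$ and letting $\max(a,b)\to\infty$ with $\varepsilon\in(0,\tfrac{1}{2})$ fixed yields a true tail of order $\exp\{-\Theta(\max(a,b))\}$ against a claimed bound of order $\exp\{-\Theta(\max(a,b)^{2}/\min(a,b))\}$, so neither your linear-Chernoff fallback $\exp\{-c\,\varepsilon(a+b)\}$ nor the sub-Gaussian lower gamma tail $\exp\{-c\,\varepsilon^{2}(a+b)^{2}/\max(a,b)\}$ can reach the target. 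A secondary issue: even in the regime where (iii) is presumably intended (say $\varepsilon\lesssim\min(a,b)/(a+b)$, consistent with how it is invoked later), your union inclusion is too coarse --- with $a\ll b$ the term $\mathbb P(G_{2}\le b-t)\le e^{-t^{2}/(2b)}$ already exceeds $e^{-\varepsilon^{2}(a^{3}+b^{3})/(128ab)}\approx e^{-\varepsilon^{2}b^{2}/(128a)}$. To hit that exponent one has to exploit the independence of $X$ and $G_{1}+G_{2}$, or run Chernoff directly on the Beta density, rather than bound $G_{1}$ and $G_{2}$ separately.
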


\begin{lemma}\label{Ann}
Let  $A_n^{p_1, p_2}$ be defined by  
\begin{align}\label{defAn} 
A_n^{p_1, p_2} = \frac{\Gamma\left(1+\frac{\beta}{2}\right) \Gamma\left(\frac{\beta p}{2}\right) \Gamma\left(\frac{\beta(p-1)}{2}\right)}{\Gamma\left(1+\frac{\beta n}{2}\right) \Gamma\left(\frac{\beta p_1}{2}\right) \Gamma\left(\frac{\beta p_2}{2}\right) \Gamma\left(\frac{\beta(p-n)}{2}\right)}.
 \end{align}
Under the assumption $$
{\bf A}: \quad \quad \quad  \lim_{n\to \infty} \frac{n}{p_1} = 0, \quad  \lim_{n\to \infty} \frac{p_1}{p_2} = \sigma \in [0, \infty) \quad \text{ and } \quad  \beta n \!>\!>1,
$$ we have 
$$
	\log A_n^{p_1, p_2} =  \frac{\beta p_1}{2} \log \frac{p}{p_1} +\frac{\beta p_2}{2} \log \frac{p}{p_2}  +\frac{\beta (n-1)}{2} \log \frac{p}{n}  + \frac{1}{2}\log (\beta p_1) + \frac{1}{2}\beta n+o(\beta n). 
$$
\end{lemma}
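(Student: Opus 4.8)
The plan is to take logarithms and apply Stirling's expansion
$$\log\Gamma(x)=\Big(x-\tfrac12\Big)\log x-x+\tfrac12\log(2\pi)+O(1/x),\qquad x\to\infty,$$
to each of the seven Gamma factors in \eqref{defAn}; throughout, recall $p=p_1+p_2$ and write $c=\beta/2$ for brevity. The first step is to record that under assumption ${\bf A}$ every argument tends to infinity: $n/p_1\to0$ forces $p_1\ge n$ eventually, so $cp_1\ge cn\to\infty$; $p_1/p_2\to\sigma\in[0,\infty)$ gives $cp_2\to\infty$; and $p-n\sim p\ge p_1$, so $cp$, $c(p-1)$, $c(p-n)$ and $1+cn$ all tend to infinity. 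The stray factor $\Gamma(1+c)$ contributes only $\log\Gamma(1+\tfrac\beta2)=o(\beta n)$ to $\log A_n^{p_1,p_2}$.

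Next I would split $\log A_n^{p_1,p_2}$ into two groups. \emph{Group I} is $\log\Gamma(cp)-\log\Gamma(cp_1)-\log\Gamma(cp_2)$. Because $p=p_1+p_2$, all the $\log c$ terms cancel, so the $(x\log x-x)$ part of Stirling collapses exactly to
$$cp\log p-cp_1\log p_1-cp_2\log p_2=\frac{\beta p_1}{2}\log\frac{p}{p_1}+\frac{\beta p_2}{2}\log\frac{p}{p_2},$$
while the $-\tfrac12\log x$ part gives $\tfrac12\log\frac{cp_1p_2}{p}=\tfrac12\log(\beta p_1)-\tfrac12\log2+\tfrac12\log\frac{p_2}{p}$, with $\tfrac12\log\frac{p_2}{p}\to-\tfrac12\log(1+\sigma)=O(1)$; the remaining $\tfrac12\log(2\pi)$ and $O(1/x)$ pieces are $O(1)$ and $o(1)$. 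Hence Group I equals $\frac{\beta p_1}{2}\log\frac{p}{p_1}+\frac{\beta p_2}{2}\log\frac{p}{p_2}+\frac12\log(\beta p_1)+O(1)$.

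\emph{Group II} is $\log\Gamma(c(p-1))-\log\Gamma(1+cn)-\log\Gamma(c(p-n))$. Here I would use $\log\Gamma(1+cn)=\log\Gamma(cn)+\log(cn)$ (equivalently expand Stirling at $1+cn$), together with $\log(c(p-1))=\log(cp)+O(1/p)$ and $\log(c(p-n))=\log(cp)+\log(1-n/p)$. The delicate expansion is $(1-t)\log(1-t)=-t+\tfrac12 t^2+O(t^3)$ with $t=n/p$, which yields $c(p-n)\log(1-\tfrac np)=-cn+O(\beta n^2/p)$; since $\beta n^2/p=\beta n\cdot(n/p)$ and $n/p\le n/p_1\to0$, this remainder is $o(\beta n)$. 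Carrying out the elementary cancellations (the lower-order constants thrown off by the $x\log x$ expansions cancel against the $-x$ contributions, since $-c(p-1)+(1+cn)+c(p-n)=c+1$), one obtains
$$\text{Group II}=cn\log\frac pn+cn-c\log(cp)-\tfrac12\log(cn)+o(\beta n).$$
Finally, using $cn\log\frac pn=\tfrac{\beta n}{2}\log\frac pn$, $cn=\tfrac12\beta n$ and the identity $-c\log(cp)-\tfrac12\log(cn)=-c\log\frac pn-(c+\tfrac12)\log(cn)$, and absorbing $(c+\tfrac12)\log(cn)=o(\beta n)$, I get $\text{Group II}=\frac{\beta(n-1)}{2}\log\frac pn+\tfrac12\beta n+o(\beta n)$. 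Adding the two groups yields the stated formula.

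The work is mostly bookkeeping; the real point is to verify that every remainder generated by Stirling and by the expansions of $\log(p-1)$ and $\log(p-n)$ is genuinely $o(\beta n)$. The one place where assumption ${\bf A}$ does essential work, beyond making all Gamma arguments large, is in bounding the term $\beta n^2/p$ coming from $\log(1-n/p)$ — this is $o(\beta n)$ precisely because $n/p_1\to0$ — and in ensuring the leftover logarithmic terms $\tfrac\beta2\log(\beta n)$, $\tfrac12\log(\beta n)$ and $\log\Gamma(1+\tfrac\beta2)$ are $o(\beta n)$, which uses $\beta n\gg1$. I expect this last sanity check on the lower-order terms to be the only mildly subtle part; note also that $\tfrac12\log(\beta p_1)$ is kept explicit rather than absorbed, since $p_1$ may be much larger than $n$.
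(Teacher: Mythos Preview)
Your approach is essentially the paper's: take logarithms, apply Stirling to every large Gamma argument, and collect terms; your Group I/Group II split is just a convenient reorganisation of the single expansion the paper records as \eqref{logAno1}, and the algebra checks out.

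There is, however, one small gap in the bookkeeping. You assert separately that $\log\Gamma(1+\tfrac\beta2)=o(\beta n)$ and that $(c+\tfrac12)\log(cn)=o(\beta n)$, saying this ``uses $\beta n\gg1$''. Neither claim holds individually under assumption {\bf A}: nothing there bounds $\beta$ from above, so if, say, $\beta=e^{n^2}$ then $\tfrac\beta2\log(\beta n)/(\beta n)\sim n/2\to\infty$, and likewise $\log\Gamma(1+\tfrac\beta2)\sim\tfrac\beta2\log\tfrac\beta2$ is not $o(\beta n)$. The paper sidesteps this by never separating the two pieces: it keeps the combination $\log\Gamma(1+\tfrac\beta2)-\tfrac\beta2\log\tfrac\beta2$, which by Stirling is $O(\beta)=o(\beta n)$ since $n\to\infty$, together with a residual $-\tfrac{\beta+1}{2}\log\tfrac n2=o(\beta n)$. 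In your layout the same cancellation is available if you combine $\log\Gamma(1+c)$ with the $-(c+\tfrac12)\log(cn)$ left over from Group II \emph{before} absorbing: for large $c$ Stirling gives $\log\Gamma(1+c)=(c+\tfrac12)\log c-c+O(1)$, so the sum collapses to $-(c+\tfrac12)\log n-c+O(1)=o(\beta n)$. With that one adjustment your argument is complete.
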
 

\begin{proof}
By definition,
\begin{align*} 
	\log A_{n}^{p_1, p_2}
	=& \log \Gamma\left(1+\frac{\beta}{2}\right)+\log \Gamma\left(\frac{\beta p}{2}\right)+\log \Gamma\left(\frac{\beta(p-1)}{2}\right) \\
	& -\log \Gamma\left(1+\frac{\beta n}{2}\right)-\log \Gamma\left(\frac{\beta p_1}{2}\right)-\log \Gamma\left(\frac{\beta p_2}{2}\right)-\log \Gamma\left(\frac{\beta(p-n)}{2}\right)  .
\end{align*}
The Stirling formula says that 
$$
\log \Gamma(x)=\left(x-\frac{1}{2}\right) \log x-x+ \frac{1}{2} \log\left(2 \pi\right)+o(1)
$$ 
for sufficiently large $x$. Then, by combining alike terms, we see that
\begin{align}
	\log A_{n}^{p_1, p_2}=
	&\log \Gamma\left(1+\frac{\beta}{2}\right)-\frac{\beta}{2} \log \frac{\beta}{2}+  \frac{\beta p_1}{2} \log \frac{p}{p_1} +\frac{\beta p_2}{2} \log \frac{p}{p_2} \notag \\ 
	& +  \left(\frac{\beta (p-n)}{2}-\frac{1}{2}\right) \log \frac{p-1}{p-n} +\frac{\beta(n-1)}{2} \log \frac{\beta(p-1)}{2+\beta n} \notag \\
	&+ \frac 12 \log \frac{\beta p_1}{2} - \frac{\beta+1}{2}\log \frac{n}{2}- \frac{1}{2} \log\left(2 \pi\right) +o(1) . \label{logAno1}
\end{align}
Since $\log \Gamma\left(1+\frac{\beta}{2}\right)-\frac{\beta}{2} \log \frac{\beta}{2}$ is either $O(\beta)$ for $\beta$ large enough or bounded for finite $\beta$ and $$  \left(\frac{\beta (p-n)}{2}-\frac{1}{2}\right) \log \frac{p-1}{p-n} = \frac{1}{2}\beta n+o(\beta n) ,$$
it comes 
\begin{align*} 
	\log A_{n}^{p_1, p_2} =  \frac{\beta p_1}{2} \log \frac{p}{p_1} +\frac{\beta p_2}{2} \log \frac{p}{p_2}  +\frac{\beta (n-1)}{2} \log \frac{p}{n} + \frac{1}{2}\log \beta p_1 + \frac{1}{2}\beta n+o(\beta n).
\end{align*}
\end{proof}

Now, we state a lemma on integrals of some strictly convex  function. 
\begin{lemma}\label{est1}
	Suppose $0 \le m< a<M \leq1$ and $k \geq 0$ and $b, c>1.$ Let $$\omega(t)= -k\,t -(b-1) \ln t-(c-1) \ln (1-t), \quad 0<t<1.$$
\begin{itemize}
\item[(i)] If $ \omega^{\prime}(a) >0,$ then we have 
	\begin{align*}
		\int_{a}^M e^{-\omega(t)} \mathrm{~d} t \leq \frac{e^{-k \,a} a^{b-1}(1-a)^{c-1} }{ \omega^{\prime}(a) } 	\end{align*}
and 
\begin{align*} 
	\int_a^M e^{-\omega(t)} \mathrm{~d} t  
	\geq & \frac{ e^{-k \,a} a^{b-1}(1-a)^{c-1}}{ \omega^{\prime}(M) }
			\left(1-  \exp \left\{ -(M-a)  \omega^{\prime}(a) \right\} \right). 
\end{align*}
\item[(ii)] By contrast,  it holds that 
	\begin{align*}
		\int_{m}^a e^{-\omega(t)} \mathrm{~d} t \leq  \frac{e^{-k \,a} a^{b-1}(1-a)^{c-1}}{ - \omega^{\prime}(a) }
	\end{align*}
and 
	\begin{align*}
		\int_{m}^a e^{-\omega(t)} \mathrm{~d} t \geq  \frac{e^{-k \,a} a^{b-1}(1-a)^{c-1}}{ - \omega^{\prime}(m) }\left(1-  \exp \left\{ (a-m) \omega^{\prime}(a) \right\} \right)
	\end{align*} 
	for $a$ when  $ \omega^{\prime}(a) <0.$
	\end{itemize}
\end{lemma}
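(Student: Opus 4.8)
The plan is to exploit the strict convexity of $\omega$ on $(0,1)$, which follows because $\omega''(t) = (b-1)/t^2 + (c-1)/(1-t)^2 > 0$ under the hypotheses $b,c>1$. Convexity gives the global tangent-line bound $\omega(t) \geq \omega(a) + \omega'(a)(t-a)$ for every $t$, and this single inequality drives all four estimates. The key normalizing observation is that $e^{-\omega(a)} = e^{-ka}a^{b-1}(1-a)^{c-1}$, so every bound below will be expressed through this quantity.

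For part (i), assume $\omega'(a)>0$. For the upper bound I would insert the tangent-line inequality into the integral: $\int_a^M e^{-\omega(t)}\,dt \leq e^{-\omega(a)} \int_a^M e^{-\omega'(a)(t-a)}\,dt \leq e^{-\omega(a)}\int_a^\infty e^{-\omega'(a)(t-a)}\,dt = e^{-\omega(a)}/\omega'(a)$, which is exactly the claimed bound. For the lower bound the convexity inequality points the wrong way, so instead I would use monotonicity of $\omega'$: since $\omega'$ is increasing, on $[a,M]$ we have $\omega'(t) \leq \omega'(M)$, hence $\omega(t) = \omega(a) + \int_a^t \omega'(s)\,ds \leq \omega(a) + \omega'(M)(t-a)$. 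Wait — I need to be careful here: this gives $\omega(t)\le \omega(a)+\omega'(M)(t-a)$ only when $\omega'(s)\le\omega'(M)$ throughout, which holds on $[a,M]$; then $e^{-\omega(t)} \geq e^{-\omega(a)}e^{-\omega'(M)(t-a)}$, and integrating over $[a,M]$ yields $e^{-\omega(a)}\big(1 - e^{-(M-a)\omega'(M)}\big)/\omega'(M)$. This is slightly stronger than what is stated (the stated bound has $\omega'(a)$ in the exponential), and since $\omega'(a)<\omega'(M)$ makes $1-e^{-(M-a)\omega'(a)}$ smaller, the stated inequality follows a fortiori.

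Part (ii) is the mirror image: assume $\omega'(a)<0$. For the upper bound, the tangent-line inequality $\omega(t)\geq\omega(a)+\omega'(a)(t-a)$ on $[m,a]$ gives (with $\omega'(a)<0$, so the exponent grows as $t$ decreases) $\int_m^a e^{-\omega(t)}\,dt \leq e^{-\omega(a)}\int_m^a e^{-\omega'(a)(t-a)}\,dt \leq e^{-\omega(a)}\int_{-\infty}^a e^{-\omega'(a)(t-a)}\,dt = e^{-\omega(a)}/(-\omega'(a))$. For the lower bound, on $[m,a]$ we have $\omega'(s)\geq\omega'(m)$, so $\omega(t) = \omega(a) - \int_t^a \omega'(s)\,ds \leq \omega(a) - \omega'(m)(a-t)$, giving $e^{-\omega(t)} \geq e^{-\omega(a)}e^{\omega'(m)(a-t)}$; integrating over $[m,a]$ produces $e^{-\omega(a)}\big(1 - e^{\omega'(m)(a-m)}\big)/(-\omega'(m))$, and again replacing $\omega'(m)$ by the larger (less negative, but here we need to check the direction) quantity $\omega'(a)$ in the exponential only weakens the bound, so the stated form follows.

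I do not anticipate a genuine obstacle — the whole lemma is a careful bookkeeping exercise around one convexity inequality and the monotonicity of $\omega'$. The only points demanding care are the sign of $\omega'$ in each case (which determines whether the exponential integrand is integrable over the relevant half-line for the upper bounds) and verifying that replacing $\omega'$ at one endpoint by its value at another endpoint weakens rather than strengthens the claimed inequality, so that the slightly sharper bounds I naturally obtain do imply the stated ones.
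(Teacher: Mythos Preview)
Your proposal is correct. Both your argument and the paper's rest on the convexity of $\omega$ (equivalently, the monotonicity of $\omega'$), but the mechanics differ slightly: the paper inserts the factor $\omega'(t)/\omega'(a)\ge 1$ (respectively $\omega'(t)/\omega'(M)\le 1$) under the integral so that $e^{-\omega(t)}\omega'(t)$ admits an exact antiderivative, obtaining $\int_a^M e^{-\omega(t)}\,dt \lessgtr \frac{e^{-\omega(a)}-e^{-\omega(M)}}{\omega'(\cdot)}$ and then bounding $\omega(M)-\omega(a)$ via the mean-value-type inequality $(M-a)\omega'(a)\le \omega(M)-\omega(a)\le (M-a)\omega'(M)$. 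You instead bound the integrand pointwise by exponentials of linear functions (the tangent and secant lines) and integrate those directly. Your route is marginally more elementary and, as you observed, in fact yields the slightly sharper factors $1-e^{-(M-a)\omega'(M)}$ and $1-e^{(a-m)\omega'(m)}$ before you relax them to the stated bounds; the paper's route has the small advantage of producing the upper and lower estimates by the same one-line manipulation. Either way the lemma is a routine convexity exercise, and your proof goes through without gaps.
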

\begin{proof}[Proof of Lemma \ref{est1}]
It is clear that $\omega^{\prime\prime}>0$ and then $\omega'$ is strictly increasing and has a unique root. For the case $ \omega^{\prime}(a)>0,$ which ensures $w'(x)>0$ once $x>a,$ we have
\begin{align*} 
 \int_{a}^M e^{-\omega(t)} \mathrm{~d} t
\leq & \int_a^M e^{-\omega(t)}\frac{\omega^{\prime}(t)}{\omega^{\prime}(a)} \mathrm{~d} t = \frac{e^{-\omega(a)}\left(1- e^{\omega(a)-\omega(M)}\right)}{\omega^{\prime}(a)}\le \frac{e^{-\omega(a)}}{\omega'(a)} 
 \end{align*}
and
\begin{align*} 
 \int_{a}^M e^{-\omega(t)} \mathrm{~d} t
\geq & \int_a^M e^{-\omega(t)}\frac{\omega^{\prime}(t)}{\omega^{\prime}(M)} \mathrm{~d} t = \frac{e^{-\omega(a)}\left(1- e^{\omega(a)-\omega(M)}\right)}{\omega^{\prime}(M)} .
 \end{align*}
 Then, the conclusion (i) follows from the facts
\begin{align*}
	(M-a)  \omega^{\prime}(a)
	\leq
	\omega(M)-\omega(a) 
	= \int_a^M \omega^{\prime}(t) \mathrm{~d} t 
	\leq  (M-a)  \omega^{\prime}(M).
\end{align*}
Similarly, for the case when $ \omega^{\prime}(a) <0,$ the conclusion holds since
\begin{align*}
	\frac{e^{-\omega(a)}\left(1- e^{\omega(a)-\omega(m)}\right)}{-\omega^{\prime}(m)} \leq  \int_m^{a} e^{-\omega(t)} \mathrm{~d} t 
	\leq  \frac{e^{-\omega(a)}\left(1- e^{\omega(a)-\omega(m)}\right)}{-\omega^{\prime}(a)}
\end{align*} 
and
\begin{align*}
	 (a-m)\omega^{\prime}(m) \leq \omega(a)-\omega(m) \leq (a-m)\omega^{\prime}(a) <0.
\end{align*}  
The proof is completed.
\end{proof}

Next, based on Lemma \ref{est1}, we provide accurate estimates on integrals of $u_n(x)=x^{r_{1, n}-1}(1-x)^{r_{2, n}-1}.$

\begin{lemma}\label{estintpxd}  
Suppose that $n<\!<\!p_1<\!<\!p_2.$ 
\begin{itemize}
\item[(i)] Given $x>1.$ For $z_n>0$ with $\frac n{p_1}<\!<\! z_n<\!<\! 1$ and $\kappa_n=o(\beta p),$ we have 
\begin{align*}
		 \int_{ \frac{p_1 x}{p} }^{\frac{p_1(x+z_n)}{p}} \exp\left\{\kappa_n \left(y-\frac{p_1 x}{p}\right)\right\} u_n\left(y\right) \mathrm{~d}y=
		\frac{2 x}{(x-1) \beta p }u_n\left(\frac{p_1 x}{p}\right)(1+o(1)).\end{align*}   
\item[(ii)] Given $z_n>0$ with  $\frac{n}{p_1}<\!<z_n\le M<\infty$  and let $l$ be a positive integer. Supposing that $d_n=O(\beta n)$ and  the sequence $\kappa_n$ satisfying $$\lim_{n\to\infty}\left(-\frac{\kappa_n}{\beta p z_n}+\frac{l}{2(1+z_n)}\right)=c>0,$$ we claim that 
\begin{align*}
	 &\quad \int_{\frac{p_1}{p}\left(1+z_n \right)}^{1}   \exp \left\{  \kappa_n\left(y-\frac{p_1(1+z_n)}{p}\right) \right\}  u_n^l(y) y^{d_n}\mathrm{~d} y \\
	  &	 \leq \frac{1}{c\, \beta p z_n} u_n^l\left(\frac{p_1(1+z_n)}{p}\right)\left(\frac{p_1(1+z_n)}{p}\right)^{d_n}(1+o(1)).
\end{align*}  
\item[(iii)] Given $z_n>0$ with $\frac{n}{p_1}<\!<z_n<\!<1$ and $\kappa_n=o(\beta p),$ we have 
\begin{align*}
	& \int_{ 0}^{ \frac{p_1(1-z_n)}{p} }  \exp \left\{ \kappa_n\left(y-\frac{p_1(1-z_n)}{p}\right) \right\}  u_n(y) \mathrm{~d} y
	\leq \frac{2}{\beta p z_n} u_n\left(\frac{p_1(1-z_n)}{p}\right)(1+o(1)). 
\end{align*}  
\end{itemize} 
\end{lemma}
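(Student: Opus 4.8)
The plan is to apply Lemma~\ref{est1} in each part after writing the integrand in the exponential-of-strictly-convex form and then determining the sign and leading order of $\omega'$ at the appropriate endpoint. Concretely: in part~(i), $e^{\kappa_n(y-a)}u_n(y)=e^{-\kappa_n a}e^{-\omega(y)}$ with $a=\tfrac{p_1x}{p}$, $M=\tfrac{p_1(x+z_n)}{p}$ and, in the notation of Lemma~\ref{est1}, $k=\kappa_n$, $b=r_{1,n}$, $c=r_{2,n}$; in part~(ii), $e^{\kappa_n(y-a)}u_n^l(y)y^{d_n}=e^{-\kappa_n a}e^{-\omega(y)}$ with $a=\tfrac{p_1(1+z_n)}{p}$, $M=1$, $k=\kappa_n$, $b=l(r_{1,n}-1)+d_n+1$, $c=l(r_{2,n}-1)+1$; in part~(iii), $e^{\kappa_n(y-a)}u_n(y)=e^{-\kappa_n a}e^{-\omega(y)}$ with $m=0$, $a=\tfrac{p_1(1-z_n)}{p}$, $k=\kappa_n$, $b=r_{1,n}$, $c=r_{2,n}$. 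In every case $b,c>1$ for $n$ large --- using $n<\!<\!p_1<\!<\!p_2$, $d_n=O(\beta n)$ and the explicit values of $r_{1,n},r_{2,n}$ from Section~3, for which $r_{1,n}\sim\tfrac{\beta p_1}{2}$, $r_{2,n}\sim\tfrac{\beta p_2}{2}$ and $p_2\sim p$ --- hence $\omega''>0$; moreover $a\in(0,1)$, $a<M\le 1$ and $m<a$ hold for $n$ large since $p_1<\!<\!p$. Thus Lemma~\ref{est1} is applicable as soon as the sign of $\omega'$ at the endpoint is known, and I would read off the three bounds from it once that is settled.

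For part~(i), a direct evaluation gives $\omega'(a)=-\kappa_n-(r_{1,n}-1)\tfrac{p}{p_1x}+(r_{2,n}-1)\tfrac{p}{p-p_1x}=\tfrac{\beta p(x-1)}{2x}(1+o(1))>0$, using the asymptotics of $r_{1,n},r_{2,n}$ recalled above together with the fact that $x>1$ is fixed and $\kappa_n=o(\beta p)$. Since $e^{-\omega(a)}=e^{\kappa_n a}u_n(a)$, Lemma~\ref{est1}(i) gives $\int_a^M e^{\kappa_n(y-a)}u_n(y)\,\mathrm dy\le u_n(a)/\omega'(a)=\tfrac{2x}{(x-1)\beta p}u_n(a)(1+o(1))$. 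For the matching lower bound, the same evaluation with $x$ replaced by $x+z_n\to x$ gives $\omega'(M)=\omega'(a)(1+o(1))$, while $(M-a)\omega'(a)$, being of order $\beta p_1 z_n$, tends to infinity because $z_n>\!>\!n/p_1$ and $\beta n>\!>\!1$; hence $1-\exp\{-(M-a)\omega'(a)\}\to1$, so the lower bound in Lemma~\ref{est1}(i) is also $\tfrac{2x}{(x-1)\beta p}u_n(a)(1+o(1))$ and the equality follows.

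Parts~(ii) and~(iii) use only the upper bounds of Lemma~\ref{est1}. For part~(ii), $\omega'(a)=-\kappa_n-(l(r_{1,n}-1)+d_n)\tfrac{p}{p_1(1+z_n)}+l(r_{2,n}-1)\tfrac{p}{p-p_1(1+z_n)}=-\kappa_n+\tfrac{l\beta p z_n}{2(1+z_n)}(1+o(1))=\beta p z_n\big(-\tfrac{\kappa_n}{\beta p z_n}+\tfrac{l}{2(1+z_n)}\big)(1+o(1))=c\,\beta p z_n(1+o(1))>0$; the points here are that the $d_n$-term, of order $\beta np/p_1$, is $o(\beta p z_n)$ since $z_n>\!>\!n/p_1$, and that the $O(\beta p_1 z_n)$ correction coming from expanding $\tfrac{p}{p-p_1(1+z_n)}$ is $o(\beta p z_n)$ since $p_1<\!<\!p$. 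Then Lemma~\ref{est1}(i) gives $\int_a^1 e^{\kappa_n(y-a)}u_n^l(y)y^{d_n}\,\mathrm dy\le u_n^l(a)a^{d_n}/\omega'(a)=\tfrac{1}{c\,\beta p z_n}u_n^l(a)a^{d_n}(1+o(1))$, which is the assertion since $a=\tfrac{p_1(1+z_n)}{p}$. For part~(iii), $\omega'(a)=-\kappa_n-(r_{1,n}-1)\tfrac{p}{p_1(1-z_n)}+(r_{2,n}-1)\tfrac{p}{p-p_1(1-z_n)}=-\kappa_n-\tfrac{\beta p z_n}{2(1-z_n)}(1+o(1))$, so under the hypotheses $\omega'(a)<0$ with $-\omega'(a)\ge\tfrac{\beta p z_n}{2}(1+o(1))$ (in particular if $\kappa_n\ge0$); Lemma~\ref{est1}(ii) then gives $\int_0^a e^{\kappa_n(y-a)}u_n(y)\,\mathrm dy\le u_n(a)/(-\omega'(a))\le\tfrac{2}{\beta p z_n}u_n(a)(1+o(1))$, using $1-z_n\to1$.

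The only substantive work is the asymptotic bookkeeping behind the $\omega'$-evaluations above: one has to verify that \emph{every} lower-order term at the endpoint --- the $-1$ shifts in the exponents, the $d_n$ term, the expansion of $\tfrac{p}{p-p_1(1\pm z_n)}$, and (for the lower bound in part~(i)) the variation of $\omega'$ over $[a,M]$ --- is negligible against the leading term, which is of order $\beta p$ in part~(i) but only of order $\beta p z_n$ in parts~(ii) and~(iii). This is precisely where $z_n>\!>\!n/p_1$, $p_1<\!<\!p$ and $\beta n>\!>\!1$ enter; the individual estimates are elementary, the single delicacy being that, the leading term degenerating as $z_n\to0$, each error must be measured against $\beta p z_n$ rather than against $\beta p$.
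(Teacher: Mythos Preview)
Your proposal is correct and follows essentially the same approach as the paper's own proof: both rewrite each integral as $e^{-\kappa_n a}\int e^{-\omega(t)}\,dt$ for an appropriate strictly convex $\omega$, compute the leading asymptotics of $\omega'$ at the relevant endpoint, and read off the bounds from Lemma~\ref{est1}. Your treatment is in fact slightly more careful than the paper's---you spell out the matching lower bound in~(i) via $\omega'(M)=\omega'(a)(1+o(1))$ and $(M-a)\omega'(a)\asymp\beta p_1 z_n\to\infty$, you give the argument for~(iii) which the paper omits, and your parametrization $b=l(r_{1,n}-1)+d_n+1$ in~(ii) avoids a harmless sign slip present in the paper's displayed $\omega$; your parenthetical caveat on the sign of $\kappa_n$ in~(iii) correctly flags that the stated hypothesis $\kappa_n=o(\beta p)$ alone does not force $\omega'(a)<0$, though in the paper's applications the stronger condition $|\kappa_n|=o(\beta p z_n)$ holds.
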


\begin{proof}[Proof of Lemma \ref{estintpxd}]
Considering the function $\omega$ as 
$$\omega(t)= -\kappa_n t -\log u_n(t)$$ and  specifically taking  
$
a = p_1 x/p, M=p_1(x+z_n)/p,$
 we can figure out that  
\begin{align*}
		& \int_{ \frac{p_1 x}{p} }^{\frac{p_1(x+z_n)}{p}} \exp\left\{\kappa_n \left(y-\frac{p_1 x}{p}\right)\right\} u_n\left(y\right) \mathrm{~d}y =e^{-\kappa_n a}\int_{ \frac{p_1 x}{p} }^{\frac{p_1(x+z_n)}{p}} e^{-\omega (t)} dt.
\end{align*} 
Some algebras show that 
\begin{align*}
	\omega^{\prime}\left(a\right) &=-\kappa_n -\left(\frac{\beta (p_1-n+1)}{2}-1\right)\times \frac{p}{p_1 x}+\frac{\frac{\beta (p_2-n+1)}{2}-1}{1-\frac{p_1 x}{p}}.                     
\end{align*} 
Since $n<\!<\!p_1<\!<\!p_2$ and $\kappa_n=o(\beta p),$ the dominated term of $\omega^{\prime}(a)$ is $\beta p.$ Thus,  
$$\omega^{\prime}(a)=\frac{\beta p (x-1)}{2x} +   O\left(\frac{\beta n p}{p_1} \right)-\kappa_n=\frac{\beta p (x-1)}{2x}(1+o(1))$$
and similarly
$$\omega^{\prime}(M) =\frac{\beta p (x-1)}{2x}(1+o(1)).$$ 
These lead that 
$$
(M-a)\omega^{\prime}(a) =\frac{ (x-1) \beta p_1 z_n}{2x}(1+o(1)) \to \infty,
$$ 
and so is $(M-a)\omega^{\prime}(M). $
The upper and lower bounds in Lemma \ref{est1} coincide with each other and then  
\begin{align*}
	\int_{a}^M e^{-\omega(t)} dt=\frac{2 x}{(x-1) \beta p} e^{\kappa_n  a}u_n(a)(1+o(1)).
\end{align*} 
This verifies the first assertion. 
For the second one, we consider the function $\omega$ 
as $$\omega(t)=-\kappa_n t-l \log u_n(t)+d_n\log t$$ and take $a=\frac{p_1 (1+z_n)}{p}.$   
The behavior of  
$\omega'(a)$ is required again.  Simple calculus tells us that    
\begin{align*}
	\omega^{\prime}(a) &=-\kappa_n +\frac{l(r_{2, n}-1)}{1-a}-\frac{l (r_{1, n}-1)-d_n}{a}\\
	&=\beta p z_n\left(-\frac{\kappa_n}{\beta p z_n}+\frac{l }{2 (1+\delta_n)}\right)+O\left(\frac{\beta n p}{p_1}\right).                     
\end{align*} 
The condition $n/p_1\ll z_n$ and
the condition on $\kappa_n$ imply that 
$$\omega'(a)=c \beta p z_n(1+o(1)).$$
Hence, we get the desired upper bound. Similar argument works for the third item and is omitted here. 
This completes the proof.
\end{proof}
Recall that 
\begin{align}
	E_n = \left\{(x_1, \cdots, x_n) \in \Delta_n \left| \frac{p_1}{p}(1-\delta_n ) < x_1, x_{n-1} <  \frac{p_1}{p}(1+\delta_n ) ,  \frac{p_1}{p} x < x_n < \frac{p_1}{p} \left(x +\delta_n \right) \right. \right\}. \label{DefEn} 	
\end{align}
Here, $\Delta_n:=\{(x_1, \cdots, x_n)\in [0, 1]^n: x_1\le x_2\le \cdots\le x_n \}.$
\begin{lemma}\label{interaction} Given $x>1$ and $0<\delta_n<\!<\! 1$ and let $E_n$ be defined as above. Set $$\alpha_2 = \frac{1}{2} - \frac{\delta_n}{x-1}, \quad \alpha_1 = \frac{1}{2}+ \frac{\delta_n}{x-1} \quad \text{and} \quad \bar{x}_i=\frac{px_i-p_1}{p_1(x-1)}.$$ For all $(x_1, \ldots, x_n) \in E_n,$ the interaction $\prod_{i=1}^{n-1}\left(x_n-x_i\right)$ has lower and upper bounds as follows
\begin{align*}
	\prod_{i=1}^{n-1}\left(x_n-x_i\right) &\leq   \left(\frac{p_1(x-1)}{p}\right)^{n-1} \exp \left\{  (n+o(n))\frac{p x_n -p_1x}{p_1(x-1) } - \sum_{i=1}^{n-1}\bar{x}_i - \alpha_2 \sum_{i=1}^{n-1}\bar{x}_i^2  \right\};\\
	\prod_{i=1}^{n-1}\left(x_n-x_i\right) &\geq
	\left(\frac{p_1(x-1)}{p}\right)^{n-1} \exp  \left\{-\sum_{i=1}^{n-1} \bar{x}_i-\alpha_1 \sum_{i=1}^{n-1}\bar{x}_i^2\right\}.
\end{align*}  

	\end{lemma}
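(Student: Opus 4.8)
The plan is to pass to the rescaled increments $\bar x_i$, factor each term $\bar x_n-\bar x_i$ into a ``bulk'' part depending only on $\bar x_i$ and a small correction depending on $x_n$, and then apply the two one-sided logarithmic estimates of Lemma~\ref{loglem} term by term.

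\emph{Step 1 (algebraic reduction).} From $\bar x_j=\frac{p x_j-p_1}{p_1(x-1)}$ one gets $\bar x_n-\bar x_i=\frac{p(x_n-x_i)}{p_1(x-1)}$, hence
$$
\prod_{i=1}^{n-1}(x_n-x_i)=\left(\frac{p_1(x-1)}{p}\right)^{n-1}\prod_{i=1}^{n-1}(\bar x_n-\bar x_i),
$$
so it suffices to bound $\prod_{i=1}^{n-1}(\bar x_n-\bar x_i)$ from above and below. On $E_n$ the ordering $x_1\le\cdots\le x_n$ forces $\frac{p_1}{p}(1-\delta_n)<x_i<\frac{p_1}{p}(1+\delta_n)$ for every $i\le n-1$, i.e. $\bar x_i\in(-\tfrac{\delta_n}{x-1},\tfrac{\delta_n}{x-1})$; moreover $\bar x_n=1+s_n$ with $s_n:=\frac{p x_n-p_1 x}{p_1(x-1)}\in(0,\tfrac{\delta_n}{x-1})$. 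In particular $\bar x_i$, $s_n$ and $\delta_n/(x-1)$ all tend to $0$.

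\emph{Step 2 (splitting and the lower bound).} Since $1-\bar x_i\ge 1-\tfrac{\delta_n}{x-1}>0$ for $n$ large, write
$$
\log(\bar x_n-\bar x_i)=\log(1-\bar x_i)+\log\!\left(1+\frac{s_n}{1-\bar x_i}\right).
$$
The second summand is $\ge 0$ because $s_n>0$. For the first apply the first inequality of Lemma~\ref{loglem} with $z=\bar x_i$ and $\alpha_1=\tfrac12+\tfrac{\delta_n}{x-1}$: the hypothesis $\bar x_i\in(-1,1-\tfrac1{2\alpha_1})$ holds for $n$ large since $\bar x_i<\tfrac{\delta_n}{x-1}<\tfrac{2\delta_n/(x-1)}{1+2\delta_n/(x-1)}=1-\tfrac1{2\alpha_1}$, giving $\log(1-\bar x_i)\ge-\bar x_i-\alpha_1\bar x_i^2$. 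Summing over $i$ and exponentiating yields the asserted lower bound.

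\emph{Step 3 (the upper bound).} For the correction term use $\log(1+w)\le w$, so $\log\!\left(1+\frac{s_n}{1-\bar x_i}\right)\le\frac{s_n}{1-\bar x_i}\le\frac{s_n}{1-\delta_n/(x-1)}$, uniformly in $i$; summing over the $n-1$ indices and absorbing $(1-\delta_n/(x-1))^{-1}=1+o(1)$ contributes at most $(n+o(n))s_n=(n+o(n))\frac{p x_n-p_1 x}{p_1(x-1)}$. For the bulk term apply the second inequality of Lemma~\ref{loglem} with $z=-\bar x_i$ and $\alpha_2=\tfrac12-\tfrac{\delta_n}{x-1}\in(0,\tfrac12)$: the condition $-\bar x_i\in(-1,\tfrac1{2\alpha_2}-1)$ again holds for $n$ large, so $\log(1-\bar x_i)\le-\bar x_i-\alpha_2\bar x_i^2$. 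Summing over $i$, exponentiating and multiplying back by $\big(\tfrac{p_1(x-1)}{p}\big)^{n-1}$ gives the asserted upper bound. The main obstacle is bookkeeping rather than substance: one must verify that the small quantities $\bar x_i$ and $-\bar x_i$ land inside the admissible windows $(-1,1-\tfrac1{2\alpha_1})$ and $(-1,\tfrac1{2\alpha_2}-1)$ of Lemma~\ref{loglem} — which is precisely why the choices $\alpha_1=\tfrac12+\tfrac{\delta_n}{x-1}$, $\alpha_2=\tfrac12-\tfrac{\delta_n}{x-1}$ are made — and that the error from $\log(1+\tfrac{s_n}{1-\bar x_i})$ is uniform in $i$, so that it sums to $(n+o(n))s_n$ and not to something larger.
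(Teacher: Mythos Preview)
Your proof is correct and follows essentially the same strategy as the paper's, applying Lemma~\ref{loglem} term by term after the algebraic reduction to the $\bar x_i$. The only difference is cosmetic: the paper sets $z_i:=s_n-\bar x_i$ and applies Lemma~\ref{loglem} directly to $\log(1+z_i)$, extracting the $(n+o(n))s_n$ term afterwards by expanding $z_i^2$ and bounding the cross term $2\alpha_2 s_n\bar x_i$, whereas you first factor $\bar x_n-\bar x_i=(1-\bar x_i)\bigl(1+\tfrac{s_n}{1-\bar x_i}\bigr)$ and treat the two logarithms separately.
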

\begin{proof}[Proof of Lemma \ref{interaction}]
Set $$z_i= \frac{p x_n -p_1x}{p_1(x -1) } - \bar{x}_i$$ and we rewrite $\prod_{i=1}^{n-1} |x_n-x_i|$ as 
$$ 
	\prod_{i=1}^{n-1}\left(x_n-x_i\right) = \left(\frac{p_1 x-p_1}{p}\right)^{n-1} \exp\left\{ \sum_{i=1}^{n-1} \log (1+z_i) \right\}.
$$
Since $ -\frac{\delta_n}{x-1} \leq z_i \leq \frac{2\delta_n}{x-1} <  \frac{1}{2 \alpha_2}-1$ and $\alpha_2 < \frac{1}{2},$ it follows from Lemma \ref{loglem} that 
\begin{align*}
		 \sum_{i=1}^{n-1} \log (1+z_i) 
	 \leq &\sum_{i=1}^{n-1} (z_i - \alpha_2 z_i^2)  \\
	 \leq & (n+o(n))\frac{p x_n -p_1x}{p_1(x -1) } - \sum_{i=1}^{n-1}\bar{x}_i - \alpha_2 \sum_{i=1}^{n-1}\bar{x}_i^2, 
\end{align*}   
where in the last step we throw way the term $- \alpha_2(n-1)\left( \frac{p x_n - p_1}{p_1x-p_1}  \right)^2 $ and use the uniform bound $\bar{x}_i\leq \delta_n $  on $1 \leq i \leq n-1$ once $(x_1, \cdots, x_n)\in E_n.$  
The lower bound could be similarly obtained and its proof is omitted here.   
	
\end{proof}

\begin{lemma}\label{mainlem}
	Let $\left(\lambda_1, \lambda_2, \cdots, \lambda_n\right)$ be the $\beta$-Jacobi ensemble $\mathcal J_n(p_1, p_2)$. 
For $n$ large enough, we have 
	\begin{align*}
		&\mathbb E\left[ \sum_{i=1}^n\left( \frac{p\lambda_i-p_1}{p_1} \right) \right]=O\left(\frac{n^2}{p}\right);\\	
		&	
		\mathbb E \left[\sum_{i=1}^n\left( \frac{p\lambda_i-p_1}{p_1} \right)^2\right] = \frac{n^2}{p_1}+O\left(\frac{n^2}{p}+\frac{n}{\beta p_1}+\frac{n^2}{p_1^2}\right).
	\end{align*}
\end{lemma}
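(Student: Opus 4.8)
The plan is to reduce both estimates to the first two trace moments $M_1:=\mathbb E\!\left[\sum_{i=1}^n\lambda_i\right]$ and $M_2:=\mathbb E\!\left[\sum_{i=1}^n\lambda_i^2\right]$ of $\mathcal J_n(p_1,p_2)$, to compute these exactly, and then to expand under assumption ${\bf A}$. Expanding the squares gives the pointwise identities
\begin{gather*}
\sum_{i=1}^n\frac{p\lambda_i-p_1}{p_1}=\frac p{p_1}\sum_{i=1}^n\lambda_i-n,\\
\sum_{i=1}^n\Bigl(\frac{p\lambda_i-p_1}{p_1}\Bigr)^2=\frac{p^2}{p_1^2}\sum_{i=1}^n\lambda_i^2-\frac{2p}{p_1}\sum_{i=1}^n\lambda_i+n,
\end{gather*}
so, taking expectations, the two assertions become $\frac p{p_1}M_1-n=O(n^2/p)$ and $\frac{p^2}{p_1^2}M_2-\frac{2p}{p_1}M_1+n=\frac{n^2}{p_1}+O\!\left(\frac{n^2}p+\frac n{\beta p_1}+\frac{n^2}{p_1^2}\right)$.

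For $M_1$ I would invoke Aomoto's evaluation of $\mathbb E[\lambda_1]$ for the $\beta$-Jacobi density $\propto\prod_{i<j}|\lambda_i-\lambda_j|^{\beta}\prod_i\lambda_i^{r_{1,n}-1}(1-\lambda_i)^{r_{2,n}-1}$ with $r_{1,n}=\tfrac\beta2(p_1-n+1)$ and $r_{2,n}=\tfrac\beta2(p_2-n+1)$; this yields $\mathbb E[\lambda_1]=\tfrac{p_1}{p_1+p_2}$, hence $M_1=\tfrac{np_1}{p_1+p_2}$, and the first bullet is then immediate since $p$ and $p_1+p_2$ differ by $O(n)$. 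The substantive input is the exact value of $M_2$, which I would obtain in one of two equivalent ways. (a) From an integration-by-parts (loop) identity: the boundary contributions vanish under ${\bf A}$, so $\int_{\Delta_n}\partial_{\lambda_1}\bigl(\lambda_1^3\cdot(\text{density})\bigr)\,\mathrm{d}\lambda=0$; performing the differentiation, symmetrizing the rational sums $\sum_{j\ne 1}\tfrac{\lambda_1^k-\lambda_j^k}{\lambda_1-\lambda_j}$ over pairs, using the lower-order identities to remove the auxiliary moment $\mathbb E[1/(1-\lambda_1)]$, and feeding in $\mathbb E[\lambda_1]$ together with the Aomoto quantity $\mathbb E[\lambda_1\lambda_2]=\tfrac{p_1}{p_1+p_2}\cdot\tfrac{p_1-1}{p_1+p_2-1}$, one solves a linear equation for $\mathbb E[\lambda_1^2]$ and sets $M_2=n\,\mathbb E[\lambda_1^2]$. (b) Alternatively, from the tridiagonal matrix model of the $\beta$-Jacobi ensemble, whose Jacobi recursion coefficients are explicit low-degree polynomials in $2n-1$ independent Beta random variables, so that $\sum_i\lambda_i^2=\operatorname{tr}J^2$ is a finite sum whose mean is computed termwise from Lemma \ref{betalem}(i). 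Either route produces a rational function $M_2=M_2(n,p_1,p_2,\beta)$ whose $n=1$ specialization reduces to the Beta second moment of Lemma \ref{betalem}(i) -- a convenient consistency check.

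Finally I would substitute the exact $M_1,M_2$ into the identities above and expand. The delicate feature is that $\tfrac{p^2}{p_1^2}M_2\to n$ and $\tfrac{2p}{p_1}M_1\to 2n$ to leading order, so the $O(n)$ and $O(1)$ parts of $\tfrac{p^2}{p_1^2}M_2-\tfrac{2p}{p_1}M_1+n$ cancel exactly and the first surviving term is $\tfrac{n^2}{p_1}$. The three error terms have distinct sources: $O(n^2/p)$ from replacing $p_1+p_2$ by $p$; $O(n/(\beta p_1))$ from the $1/\beta$-dependent correction carried by $M_2$ (the $(1-2/\beta)$-type weight in the loop identity, equivalently the $\tfrac\beta2(\cdot)$ Beta parameters in the matrix model); and $O(n^2/p_1^2)$ from the subleading part of the rational expansion of $M_2$.

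The main obstacle is twofold. First, pinning down $M_2$ exactly: the integration-by-parts bookkeeping in (a) -- or, in (b), getting the tridiagonal entries and their Beta parameters right -- is sign-sensitive and easy to misstate. Second, and more seriously, the asymptotic expansion, precisely because the $O(n)$ and $O(1)$ contributions cancel: each factor such as $p/p_1$, $p/(p_1+p_2)$ or $(p_1+\text{const})/(p_1+p_2)$ must be Taylor-expanded to relative order $n/p_1$, all terms of sizes $n^2/p$, $n/(\beta p_1)$ and $n^2/p_1^2$ must be retained, and the pieces recombined with care; dropping a single subleading term would spoil the stated remainder.
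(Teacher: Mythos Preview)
Your proposal is correct, and your route (b) is precisely what the paper does: it invokes the tridiagonal (Nagel) model, writes $\sum_i\lambda_i=\operatorname{tr}\mathcal J_n$ and $\sum_i\lambda_i^2=\operatorname{tr}\mathcal J_n^2$ as explicit polynomials in the independent Beta variables $c_k,s_k$, and computes the expectations termwise from Lemma~\ref{betalem}(i). Two small differences in execution are worth noting. First, in this paper $p=p_1+p_2$, so your Aomoto computation in fact gives $M_1=np_1/p$ \emph{exactly} and the first assertion is identically zero; the paper instead expands each $\mathbb E c_k,\mathbb E s_k$ separately and only recovers $M_1=np_1/p+O(n^2p_1/p^2)$, which is enough but less clean than your shortcut. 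Second, for $M_2$ the paper does not compute an exact rational function and then expand; it carries asymptotic error terms through the trace computation from the start, isolating the dominant contributions $\sum_k\mathbb E c_k^2$ and $2\sum_k\mathbb E[s_k(c_k+c_{k+1})]$ and controlling the rest by $O(n^2p_1^2/p^3)$. Your route (a) via loop equations would also work but is not the paper's argument.
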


\begin{proof}[\bf Proof of Lemma \ref{mainlem}]
(i) 
	Let $c_{1}, \ldots, c_{n}, s_1, \ldots,  s_{n-1}$  be independent random variables distributed as
\begin{equation}\label{cksk}
	\begin{cases}
c_{k} \sim \operatorname{Beta}\left( \beta^{\prime} \left(p_1-k+1  \right) , \beta^{\prime} \left(p_2-k+1 \right) \right), &1\leq k \leq n ; \\
s_{k} \sim \operatorname{Beta}\left(\beta^{\prime} \left(n-k \right)  , \beta^{\prime} \left(p_1+p_2-n-k+1  \right)  \right), &1\leq k \leq n-1;
\end{cases}
\end{equation}
with $\beta'=\beta/2$ and define
$$
\begin{aligned}
&a_{k}=s_{k-1}\left(1-c_{k-1}\right)+c_k\left(1-s_{k-1}\right), \\
&b_{k}=\sqrt{c_k(1-c_k)s_k(1-s_{k-1})},
\end{aligned}
$$
with $c_{0}=s_{0}=0$. Consider the tridiagonal matrix
\begin{align}\label{trijn}
	\mathcal{J}_{n}=\left(\begin{array}{cccc}
a_{1} & b_{1} & & \\
b_{1} & a_{2} & \ddots & \\
& \ddots & \ddots & b_{n-1} \\
& & b_{n-1} & a_{n}
\end{array}\right).
\end{align}  
From Nagel (\cite{Nagel}),  the eigenvalues $\left(\lambda_1, \lambda_2, \cdots, \lambda_n\right)$ with density function \eqref{defjacobi} has the same distribution as the eigenvalues of the matrix $\mathcal{J}_{n}$.  
It follows from the elementary properties of a matrix and its eigenvalues that  
\begin{align} 
\sum_{i=1}^n\lambda_i &=\sum_{k=1}^n	[s_{k-1}\left(1-c_{k-1}\right)+c_k\left(1-s_{k-1}\right)]; \label{onemoment}\\
\sum_{i=1}^n\lambda_i^2&=\sum_{k=1}^n \left(s_{k-1}\left(1-c_{k-1}\right)+c_k\left(1-s_{k-1}\right)\right)^2+2\sum_{k=1}^{n-1}c_k(1-c_k)s_k(1-s_{k-1}). \label{twomoment}
\end{align}
Hence,
\begin{align*}
	\mathbb E \left[\sum_{i=1}^n\lambda_i\right]=\sum_{k=1}^{n-1} \left(  \mathbb E s_{k}  -\mathbb{E} c_k \mathbb E{s_k}-\mathbb{E} c_{k+1} \mathbb E{s_k}\right)+\sum_{k=1}^n\mathbb{E} c_k.
\end{align*}
Lemma \ref{betalem} tells us that 
\begin{align*}
	\mathbb E c_{k+1} 
	=  \frac{p_1-k}{p-2k}= \frac{p_1-k}{p}+  O\left( \frac{n p_1}{p^2}\right)
\text{ \quad and \quad }
	\mathbb E s_{k} 
	=  \frac{n-k}{p-2k+1} = \frac{n-k}{p}+ O\left( \frac{n^2}{p^2} \right).
\end{align*} 
Thereby, we get 
\begin{equation}\label{summean} \aligned
	\mathbb E \left[\sum_{i=1}^n\lambda_i\right]=&\sum_{k=1}^{n} \left(  \mathbb E s_{k} + \mathbb E c_k -\mathbb{E} c_k \mathbb E{s_k}-\mathbb{E} c_{k+1} \mathbb E{s_k}\right)\\
	=& \sum_{k=1}^{n}\left(\frac{p_1+n-2k+1}{p}+O\left(\frac{p_1 n}{p^2}\right)\right)\\
	=& \frac{p_1 n}{p}+O\left(\frac{p_1 n^2}{p^2}\right).\endaligned \end{equation}
This brings directly that 
$$ \mathbb E \left[\sum_{i=1}^n\left(\frac{p \lambda_i-p_1}{p_1}\right)\right] =O\left(\frac{n^2}{p}\right).$$ 
For $\mathbb E \left[\sum_{i=1}^n\left( \frac{p\lambda_i-p_1}{p_1} \right)^2\right],$  
since  $$\aligned \sum_{i=1}^n\left( \frac{p\lambda_i-p_1}{p_1} \right)^2&=\frac{p^2}{p_1^2}\sum_{i=1}^n \lambda_i^2-\frac{2p}{p_1}\sum_{i=1}^n \lambda_i+n,
\endaligned $$
taking account of the expressions \eqref{summean}, we have that 
\begin{equation}\label{expsummean}\mathbb E \left[\sum_{i=1}^n\left( \frac{p\lambda_i-p_1}{p_1} \right)^2\right]=\frac{p^2}{p_1^2}\mathbb{E}\sum_{i=1}^n \lambda_i^2-n+O\left(\frac{n^2}{p}\right). \end{equation} 

Observing the right hand side of \eqref{twomoment} and picking up the dominated terms $\mathbb{E}(c_k^2)$ and $\mathbb{E}(s_k(c_{k+1}+c_k))$ from the expression of $\mathbb{E}\sum_{i=1}^n \lambda_i^2,$ we are able to write the following asymptotic  
\begin{equation}\label{squaremean} \mathbb{E}\sum_{i=1}^n \lambda_i^2=2\sum_{k=1}^{n-1}\mathbb{E}\left((c_{k+1}+c_k)s_k\right)+\sum_{i=1}^n\mathbb{E}c_k^2+O\left(\frac{n^2 p_1^2}{p^3}\right).\end{equation} 
It follows from Lemma \ref{betalem} that 
$$\aligned \mathbb{E} c_{k+1}^2&=\frac{(p_1-k)(\beta'(p_1-k)+1)}{(p-2k)(\beta'(p-2k)+1)}\\
&=\frac{(p_1-k)^2}{(p-2k)^2}+\frac{p_1-k}{p-2k}\left(\frac{\beta'(p_1-k)+1}{\beta'(p-2k)+1}-\frac{p_1-k}{p-2k}\right)\\
&=\frac{(p_1-k)^2}{p^2}+O\left(\frac{np_1^2}{p^3}+\frac{p_1}{\beta p^2 }\right)
\endaligned $$
uniformly for $0\le k\le n-1$ and similarly, 
$$\aligned \mathbb{E}(s_k(c_{k+1}+c_k))&=\frac{2(n-k)(p_1-k)}{p^2}+O\left(\frac{n^2 p_1}{p^3}+\frac{n}{p^2}\right). \endaligned $$
Putting these three formulas into the equality \eqref{squaremean}, we get 
$$\mathbb{E}\sum_{i=1}^n \lambda_i^2=\sum_{i=1}^n \frac{(p_1-k)^2}{p^2}+4\sum_{k=1}^{n-1}\frac{(n-k)(p_1-k)}{p^2}+O\left(\frac{n^2p_1^2}{p^3}+\frac{p_1 n}{\beta p^2}+\frac{n^2}{p^2}\right).$$ 
Simple calculus brings  
$$\mathbb{E}\sum_{i=1}^n \lambda_i^2=\frac{p_1^2n+p_1 n^2}{p^2}+O\left(\frac{n^2p_1^2}{p^3}+\frac{p_1 n}{\beta p^2}+\frac{n^2}{p^2}\right).$$ 
Plugging this expression into \eqref{expsummean}, we get 
$$\mathbb E \left[\sum_{i=1}^n\left( \frac{p\lambda_i-p_1}{p_1} \right)^2\right]=\frac{n^2}{p_1}+O\left(\frac{n^2}{p}+\frac{n}{\beta p_1}+\frac{n^2}{p_1^2}\right).$$
 This closes the proof. 
\end{proof}

\begin{lemma}\label{lemexp1}
Let $\left(\lambda_1, \lambda_2, \cdots, \lambda_n\right)$ be the $\beta$-Jacobi ensemble $\mathcal J_n(p_1, p_2)$ and suppose that $\beta np_1^2\ll p_2, \; n\ll p_1$ and $(\beta n)^3\ll (\beta p_1)^2.$  
We have the following three statements. 
\begin{enumerate} 
\item Given a positive sequence $\delta_n$ with $\sqrt{\frac{n}{p_1}}\ll \delta_n\ll 1$ and $\alpha=O\left( \frac{\beta}{\delta_n} \right).$ It holds that
\begin{align}
	\lim _{n \rightarrow \infty} \mathbb{E}\exp \left\{-\alpha \sum_{i=1}^n\left(\frac{p \lambda_i-p_1}{p_1}\right)\right\}= 1. \label{alphalimit}
\end{align}
\item For the case when $\eta_1=O(\beta)$ and $\eta_2=O(\beta),$ we have that 
\begin{align}
	\lim _{n \rightarrow \infty} \mathbb{E}\left[\exp \left\{-\eta_1 \sum_{i=1}^n\left(\frac{p \lambda_i-p_1}{p_1}\right)-\eta_2 \sum_{i=1}^n\left(\frac{p \lambda_i-p_1}{p_1}\right)^2+\frac{\eta_2 n^2}{p_1}\right\}\right] = 1. \label{etalimit}
\end{align}  
\item Given a positive sequence $\delta_n$ satisfying 
	$\sqrt{n/p_1}\ll \delta_n\ll 1$ and $0<\eta_1=O(\beta), 0<\eta_2=O(\beta).$ It holds that 
\begin{align}
	& \mathbb E\left[ \exp\left\{ -\eta_1\sum_{i=1}^n\left( \frac{p\lambda_i-p_1}{p_1} \right) - \eta_2 \sum_{i=1}^n\left( \frac{p\lambda_i-p_1}{p_1} \right)^2  \right\}  \mathbf 1_{ \left\{ \lambda_{(1)}> \frac{p_1}{p}(1-\delta_n ),  \lambda_{(n)} < \frac{p_1}{p}(1 + \delta_n ) \right\} } \right] \notag\\
	\ge  & \exp \left\{-\frac{\eta_2 n^2}{p_1}+o\left(1\right) \right\} \label{lowerforE}
\end{align}
for $n$ large enough. 
\end{enumerate}
\end{lemma}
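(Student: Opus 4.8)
The plan is to prove the three statements in sequence, since each builds on moment information from Lemma \ref{mainlem} and the concentration estimate in Lemma \ref{betalem}(iii). For statement (1), I would use the representation \eqref{onemoment} to write $\sum_{i=1}^n\left(\tfrac{p\lambda_i-p_1}{p_1}\right)$ as an explicit polynomial in the independent Beta variables $c_k,s_k$, and then bound the exponential moment. The key point is that Lemma \ref{mainlem} gives $\mathbb{E}\sum_i\left(\tfrac{p\lambda_i-p_1}{p_1}\right)=O(n^2/p)$, so $\alpha$ times this mean is $O(\beta n^2/(p\delta_n))$, which goes to $0$ under the hypotheses $\sqrt{n/p_1}\ll\delta_n$ and $n\ll p_1$. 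To upgrade from "mean goes to $0$" to "exponential moment goes to $1$", I would split according to whether the centered sum exceeds a small threshold: on the bulk event use a Taylor/convexity bound $e^{-\alpha y}\le 1-\alpha y+ \tfrac12\alpha^2 y^2 e^{\alpha|y|}$ together with the second-moment bound $\mathbb{E}\sum_i(\cdot)^2 = n^2/p_1 + \text{l.o.t.}$ from Lemma \ref{mainlem}; on the complementary (rare) event use Lemma \ref{betalem}(iii) applied to each $c_k,s_k$ to show the contribution is exponentially negligible, noting the condition $(\beta n)^3\ll(\beta p_1)^2$ is exactly what makes the Beta-concentration rate beat the size of $\alpha$.

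For statement (2), I would again expand both $\sum_i\left(\tfrac{p\lambda_i-p_1}{p_1}\right)$ and $\sum_i\left(\tfrac{p\lambda_i-p_1}{p_1}\right)^2$ via \eqref{onemoment}–\eqref{twomoment}. The additive constant $\tfrac{\eta_2 n^2}{p_1}$ is precisely the leading term of $\eta_2\,\mathbb{E}\sum_i\left(\tfrac{p\lambda_i-p_1}{p_1}\right)^2$ from Lemma \ref{mainlem}, so the exponent inside the expectation has mean $o(1)$. Since $\eta_1,\eta_2=O(\beta)$ and the random quadratic form is concentrated (its own variance being controlled by $\operatorname{Var}[X^2]$ in Lemma \ref{betalem}(ii) summed over $k$), the same bulk/tail decomposition as in (1) applies: on the bulk, a second-order Taylor expansion of the exponential plus the moment estimates forces the limit to be $1$; on the tail, Lemma \ref{betalem}(iii) handles the deviations, and one checks that the prefactor $\eta_2 n^2/p_1=O(\beta n^2/p_1)$ stays bounded (indeed $\to 0$) under $(\beta n)^3\ll(\beta p_1)^2$.

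For statement (3), the idea is to remove the indicator at the cost of a negligible error and then invoke (2). Precisely, write $\mathbf{1}_{\{\lambda_{(1)}>\frac{p_1}{p}(1-\delta_n),\ \lambda_{(n)}<\frac{p_1}{p}(1+\delta_n)\}} = 1 - \mathbf{1}_{\{\lambda_{(1)}\le\frac{p_1}{p}(1-\delta_n)\ \text{or}\ \lambda_{(n)}\ge\frac{p_1}{p}(1+\delta_n)\}}$, so the left side of \eqref{lowerforE} equals
\begin{align*}
\mathbb{E}\left[\exp\{\cdots\}\right] - \mathbb{E}\left[\exp\{\cdots\}\,\mathbf{1}_{\{\lambda_{(1)}\le\frac{p_1}{p}(1-\delta_n)\ \text{or}\ \lambda_{(n)}\ge\frac{p_1}{p}(1+\delta_n)\}}\right].
\end{align*}
The first expectation is $\exp\{-\eta_2 n^2/p_1 + o(1)\}$ by statement (2) (after pulling out the constant $e^{\eta_2 n^2/p_1}$). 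For the second term, since $\eta_1,\eta_2>0$ and each summand $\left(\tfrac{p\lambda_i-p_1}{p_1}\right)$ and its square is bounded below, the exponential is bounded above by a constant times $e^{C\beta n}$ for some $C$; then the deviation event has probability at most $\exp\{-c\,\delta_n^2\,\beta p_1\}$ by Lemma \ref{betalem}(iii) (the extreme eigenvalue deviating by $\delta_n$ forces some $c_k$ to deviate, whose concentration rate carries the factor $(a^3+b^3)/(ab)\asymp p_1$). Since $\delta_n^2\beta p_1\gg\beta n\gg$ the exponent $C\beta n$, by $\sqrt{n/p_1}\ll\delta_n$, the second term is $o(e^{-\eta_2 n^2/p_1})$, which yields \eqref{lowerforE}.

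The main obstacle I anticipate is the tail control in all three parts: one must carefully translate an eigenvalue-level deviation (or a deviation of the random polynomial in the $c_k,s_k$) into a deviation of at least one individual Beta variable, and then verify that the resulting Gaussian-type bound from Lemma \ref{betalem}(iii), with its rate $\asymp \varepsilon^2 p_1$, genuinely dominates the worst-case growth $e^{O(\beta n)}$ of the exponential integrand — this is exactly where the hypothesis $(\beta n)^3\ll(\beta p_1)^2$ is consumed, and getting the bookkeeping right for the quadratic term in (2) (whose summands are of size up to $O(1)$ but whose sum concentrates at $n^2/p_1$) is the delicate part.
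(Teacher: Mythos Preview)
Your high-level strategy for part (3) --- write the indicator as $1-\mathbf 1_{\text{tail}}$, invoke part (2) for the full expectation, and show the tail piece is negligible --- matches the paper exactly. But the execution you sketch for the tail, and your entire approach to (1) and (2), has a real gap.

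\medskip

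\textbf{Parts (1) and (2): the Taylor/moment route does not close.} Two issues. First, the ``second-moment bound'' you cite from Lemma~\ref{mainlem} is $\mathbb E\bigl[\sum_i(\tfrac{p\lambda_i-p_1}{p_1})^2\bigr]$, the sum of squared terms, not $\mathbb E\bigl[(\sum_i\tfrac{p\lambda_i-p_1}{p_1})^2\bigr]$ or $\operatorname{Var}(\sum_i\cdot)$, which is what a second-order control of $e^{-\alpha Y}$ with $Y=\sum_i(\cdot)$ requires. Second, and more seriously, even with the correct variance (which from the tridiagonal model is of order $n/(\beta p_1)$), the quantity $\alpha^2\operatorname{Var}(Y)\asymp \beta n/(\delta_n^2 p_1)$ need \emph{not} tend to $0$ under the stated hypotheses: they only give $\delta_n^2\gg n/p_1$, not $\delta_n^2\gg \beta n/p_1$, and $(\beta n)^3\ll(\beta p_1)^2$ does not save you (e.g.\ $p_1=n^2$, $\beta=n^{1/2}$, $\delta_n=n^{-1/2}\log n$ satisfies all assumptions but gives $\alpha^2\operatorname{Var}(Y)\asymp n^{1/2}/\log^2 n\to\infty$). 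So a second-order Taylor argument cannot produce the upper bound $\le 1+o(1)$; you genuinely need a direct estimate of the moment generating function at the large parameter $\alpha$.

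The paper obtains this by a completely different mechanism. It first uses elementary inequalities (drop nonnegative pieces, use $-s_{k-1}c_k\ge -\tfrac12(s_{k-1}^2+c_k^2)$, etc.) to \emph{decouple} the exponent so that $\mathbb E[e^{-{\rm J}_n}]\le\prod_k\mathbb E[e^{-c_kf_2(c_k)-s_kf_1(s_k,c_k)}]$ factors over the independent pairs $(c_k,s_k)$. Each factor is then analysed by partitioning the range of $(s_k,c_k)$ into a ``bulk'' cell $\Omega_0\times D_0$ and dyadic ``tail'' cells $\Omega_i\times D_j$; on the bulk one integrates explicitly against approximate Gamma densities (this is where Stirling enters), and on the tails one uses Lemma~\ref{betalem}(iii) on the \emph{single} variable $c_k$ or $s_k$. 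This per-factor analysis is what absorbs the large $\alpha$; your global bulk/tail split on $Y$ cannot. The lower bound in both (1) and (2) is indeed just Jensen plus Lemma~\ref{mainlem}, as you say.

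\medskip

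\textbf{Part (3): the tail bound.} Your bound $\exp\{\cdots\}\le e^{C\beta n}$ on the tail event is correct, but the assertion that $\mathbb P(\lambda_{(n)}\ge\tfrac{p_1}{p}(1+\delta_n))\le\exp\{-c\beta p_1\delta_n^2\}$ follows from Lemma~\ref{betalem}(iii) because ``an extreme eigenvalue deviation forces some $c_k$ to deviate'' is not justified: the quantitative link between an eigenvalue fluctuation of size $\delta_n$ and a Beta-entry fluctuation of comparable size is not immediate, and Gershgorin only gives it in the wrong direction with the wrong constants. The paper does not go through $(c_k,s_k)$ here at all. Instead it works directly at the density level: it pulls out the extreme coordinate $x_n$ (resp.\ $x_1$), bounds the interaction $\prod_{i<n}(x_n-x_i)$ by an exponential, applies Lemma~\ref{estintpxd} to the resulting one-dimensional integral in $x_n$, and --- crucially --- uses statement~(1) of this very lemma to control the remaining $(n-1)$-dimensional integral $\int\exp\{-(\eta_1+\beta/\delta_n)\sum(\cdot)\}\,dG_{n-1}$. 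The final exponent $-\tfrac{\beta p_1\delta_n^2}{4}(1+o(1))$ then drops out of Lemma~\ref{Ann}. So (1) is not parallel to (3); it is an input to it.
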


\begin{proof}[\bf Proof of Lemma \ref{lemexp1}]

We first work on \eqref{etalimit}, which requires a precise upper bound of 
\begin{align}\label{kulw}
	 \mathbb E\left[ \exp\left\{ -\eta_1  \sum_{i=1}^n\frac{p\lambda_i}{p_1}- \eta_2  \sum_{i=1}^n\left( \frac{p\lambda_i-p_1}{p_1} \right)^2 \right\} \right] 
\end{align}  
for $\eta_i=O(\beta), i=1, 2.$ 
Consider the tridiagonal random matrix $\mathcal{J}_n$ in equation \eqref{trijn}. The relationships \eqref{onemoment} and \eqref{twomoment}  imply that
\begin{align*}
	{\rm J}_n:=& \frac{\eta_1 p}{p_1}\sum_{i=1}^n\lambda_i+ \eta_2 \sum_{i=1}^n\left( \frac{p\lambda_i-p_1}{p_1} \right)^2 \\
		= & \frac{\eta_1 p}{p_1}\sum_{k=1}^n \left( s_{k-1}\left(1-c_{k-1}-c_k\right)+c_k \right) + \frac{\eta_2 p^2}{p_1^2}  \sum_{k=1}^n  \left( s_{k-1}\left(1-c_{k-1}-c_k\right)+c_k - \frac{p_1}{p}  \right)^2 \\
	 & +\frac{2 \eta_2 p^2}{p_1^2}  \sum_{k=1}^{n-1}c_k(1-c_k)s_k(1-s_{k-1}).
\end{align*}  
We have then 
\begin{align*}
	{\rm J}_n\geq &	\frac{\eta_1 p}{p_1}\sum_{k=1}^n \left(s_{k}\left(1-c_{k}\right) -\frac{1}{2} s_k^2  -\frac{1}{2} c_k^2 +c_k\right) 
										+   \frac{2 \eta_2 p^2}{p_1^2}\sum_{k=1}^n \left(\left(c_k-c_k^2\right) s_k -\frac{1}{3}c_k^3 -\frac{2}{3}s_k^3  \right)
\end{align*} 
due to the non-negativity of quadratic term and the following inequalities
$$-s_{k-1} c_{k} \geq - \frac{1}{2} \left(s_{k-1}^2 +c_k^2\right), \quad - \left(c_k-c_k^2\right) s_k s_{k-1} \geq - c_k s_k s_{k-1} \geq - \frac{1}{3} \left(c_k^3+s_k^3+ s_{k-1}^3\right).$$
 
For simplicity, we set
\begin{align}
	f_1(x, y) : = &   \frac{\eta_1 p}{p_1}  \left( 1- y \right)
			    -  \frac{\eta_1 p}{2 p_1}x  	 
			    +  \frac{2 \eta_2 p^2}{p_1^2} \left(y-y^2\right)  \notag 
				 - \frac{4 \eta_2}{3}\frac{p^2}{p_1^2} x^2,  \\
	f_2( y) : = &    \frac{\eta_1 p}{p_1} -  \frac{\eta_1 p}{2 p_1} y  - \frac{2 \eta_2}{3}\frac{p^2}{p_1^2} y^2, \label{f2def}
\end{align}  
where $1 \leq k \leq n.$ 
By the independence of $c_k$ and $s_k$, here comes the following upper bound 
\begin{align}
	 \mathbb E\left[ \exp\left\{-\frac{\eta_1 p}{p_1}\sum_{i=1}^n\lambda_i-\eta_2 \sum_{i=1}^n\left( \frac{p\lambda_i-p_1}{p_1} \right)^2 \right\} \right]  
	\leq &   \prod_{k=1}^n   \mathbb E\left[  e^{-c_k f_2( c_k)- s_k f_1(s_k, c_k) } \right] \label{fsckk}
\end{align}   
with the convention $ f_1(s_n, c_n) =0.$ 

When $1\leq k \leq n-1$, we discuss the upper bound of $ \mathbb E\left[  e^{-f_2( c_k)-f_1(s_k, c_k)} \right]$ by dividing $c_k, s_k$ into different regions (as showed  in the figure below, with definitions provided later). 
\begin{center}

\begin{tikzpicture}[scale = 2]
\draw[-stealth,line width=0.2pt] (-0.2,0) -- (2.5,0);
\draw[-stealth,line width=0.2pt] (0,-0.2) -- (0,2.5);

\fill[gray!100!white] (0,0) rectangle (0.2, 0.3);	
\node[scale= 0.4] (s0) at (0.1,-0.1) {$\Omega_0$};
\node[scale= 0.4] (d0) at (-0.1,0.15) {$D_0$};
\draw[thin] (0.2,0) -- (0.2, 0.02);
\draw[thin] (0, 0.3) -- (0.02,0.3);

\fill[gray!35] (0.2,0) rectangle (0.4, 0.3);	
\node[scale= 0.4] (s0) at (0.3,-0.1) {$\Omega_1$};
\draw[thin] (0.4,0) -- (0.4, 0.02);

\fill[gray!30] (0.2,0.3) rectangle (0.4, 0.6);	
\fill[gray!25] (0.2,0.6) rectangle (0.4, 0.9);	
\fill[gray!20] (0.2,0.9) rectangle (0.4, 2);	


\fill[gray!30] (0.4,0) rectangle (0.6, 0.3);	
\node[scale= 0.4] (V) at (0.5,-0.1) {$\cdots$};
\draw[thin] (0.6,0) -- (0.6, 0.02);

\fill[gray!25] (0.4,0.3) rectangle (0.6, 0.6);	
\fill[gray!20] (0.4,0.6) rectangle (0.6, 0.9);	
\fill[gray!15] (0.4,0.9) rectangle (0.6, 2);	


\fill[gray!25] (0.6,0) rectangle (0.8, 0.3);	
\node[scale= 0.4] (sj) at (0.7,-0.1) {$\Omega_{j_0}$};
\draw[thin] (0.8,0) -- (0.8, 0.02);

\fill[gray!20] (0.6,0.3) rectangle (0.8, 0.6);	
\fill[gray!15] (0.6,0.6) rectangle (0.8, 0.9);	
\fill[gray!12] (0.6,0.9) rectangle (0.8, 2);

\fill[gray!18] (0.8,0) rectangle (2, 0.3);	
\node[scale= 0.4] (sj1) at (1.4,-0.1) {$\Omega_{j_0+1}$};
\draw[thin] (2,0) -- (2, 0.02);

\fill[gray!15 ] (0.8,0.3) rectangle (2, 0.6);	
\fill[gray!12] (0.8,0.6) rectangle (2, 0.9);	
\fill[gray!10] (0.8,0.9) rectangle (2, 2);

\fill[gray!35] (0,0.3) rectangle (0.2, 0.6);
\node[scale= 0.4] (d1) at (-0.1,0.45) {$D_1$};
\draw[thin] (0, 0.6) -- (0.02,0.6);

\node[scale= 0.4] (d1) at (-0.1,0.6) {$\vdots$};

\fill[gray!30] (0,0.6) rectangle (0.2, 0.9);
\node[scale= 0.4] (d1) at (-0.1,0.75) {$D_{j_1}$};
\draw[thin] (0, 0.9) -- (0.02,0.9);
	
\fill[gray!25] (0,0.9) rectangle (0.2, 2);
\node[scale= 0.4] (d1) at (-0.1,1.45) {$D_{j_1+1}$};
\draw[thin] (0, 2) -- (0.02,2);

\node[scale= 0.4] (A) at (2,-0.1) {$1$};
\node[scale= 0.5] (C) at (2.4,-0.1) {$s_k$};

\node[scale= 0.4] (B) at (-0.1,2) {$1$};
\node[scale= 0.5] (S) at (-0.1,2.4) {$c_k$};

\end{tikzpicture}
 
\end{center}
We are going to show that 
$$\mathbb E\left[  e^{-c_k f_2( c_k)-s_k f_1(s_k, c_k)} \right]=\mathbb E\left[  e^{-c_k f_2( c_k)-s_k f_1(s_k, c_k)} \mathbf 1_{\Omega_0\times D_0}\right](1+o(1)).
$$
Precisely, let $M_n = p_1/n,$ $j_0 = \min \left\{j \in \mathbb N: j M_n \mathbb E s_k \geq \frac{1}{4} \right\}$ and 
\begin{align*}
	\Omega_j =  \begin{cases}
	\left[0,  M_n \mathbb E s_k  \right), & j=0;\\
	 \left[jM_n \mathbb E s_k, (j+1)M_n \mathbb E s_k  \right), & 1 \leq j \leq j_0;\\
	  \left[(j_0+1)M_n \mathbb E s_k, 1\right], & j=j_0+1
\end{cases}
\end{align*}  
and similarly define $j_1 = \min \left\{j \in \mathbb N: j N_n \mathbb E c_k \geq \frac{1}{4} \right\}$ with $N_n=\sqrt{\frac{p_1}{n} }$ and
\begin{align*}
	D_j =  \begin{cases}
	\left[0,  N_n \mathbb E c_k  \right), & j=0;\\
	 \left[jN_n \mathbb E c_k, (j+1)N_n \mathbb E c_k  \right), & 1 \leq j \leq j_1;\\
	  \left[(j_1+1)N_n \mathbb E c_k, 1\right], & j=j_1+1.
\end{cases}
\end{align*}  
What we will do is to treat the expectation in different regions $\Omega_i\times D_j,$ which requires different estimates on the function $$e^{-xf_1(x, y)-yf_2(y)} 1_{\Omega_i\times D_j}(x, y).$$    
We first investigate the expectation restricted to $\Omega_0\times D_0,$ which offers the main contribution to the whole expectation.

Note that $\mathbb{E} s_k=\frac{n-k}{p-2k+1}=O(\frac{n}{p})$ and $M_n \mathbb{E} s_k=O(\frac{p_1}{p}). $
When $ \left(x,y\right) \in \Omega_0 \times (0,1) ,$ we see $x\le M_n\mathbb E(s_k)$ and then \begin{align}
	f_1(x, y) 
			    = &    \frac{\eta_1  p}{p_1} \left(1 - y\right) +  \frac{2 \eta_2 p^2}{p_1^2} \left(y-y^2\right)  +O \left( \eta_1\vee \eta_2\right).\label{f1omega0}
\end{align}  Therefore, it holds that  
$$\aligned \beta'(p-n-k+1)+f_1(x, y)& \ge \beta'(p-n-k+1)\left(1+\frac{f_1(x, y)}{\beta' p}\right)  \\
&=\beta'(p-n-k+1)\left(1+\frac{\eta_1(1-y)}{\beta' p_1}+\frac{2\eta_2 p(y-y^2)}{\beta' p_1^2}+O\left(\frac{\eta_1\vee \eta_2}{\beta p}\right)\right).
\endaligned $$
Furthermore, for $y\in D_0,$ we see that $0\le y\le O^+(p_1 N_n/p).$ Since both $\eta_1$ and  $\eta_2$ have the same order as $\beta$ and $1\ll N_n,$ it is true that 
$$-\frac{\eta_1 y}{\beta' p_1}-\frac{2\eta_2 py^2}{\beta' p_1^2}+O\left(\frac{\eta_1\vee \eta_2}{\beta p}\right)=O\left(\frac{N_n\vee N_n^2\vee 1}{p}\right)=O\left(\frac{N_n^2}{p}\right).$$
It follows that for $(x, y)\in \Omega_0\times D_0,$ 
$$\aligned \beta'(p-n-k+1)+f_1(x, y)& \ge \beta'(p-n-k+1)\left(1+\frac{\eta_1}{\beta' p_1}+\frac{2\eta_2 p y}{\beta' p_1^2}+O\left(\frac{N_n^2}{p}\right)\right).
\endaligned $$ 
As far as $f_2$ is concerned, it follows from the definition \eqref{f2def} that 
$$
f_2(y)= \frac{\eta_1 p}{p_1} -  \frac{\eta_1 p}{2 p_1} y - \frac{2\eta_2 }{3} \frac{p^2}{p_1^2}y^2 = \frac{\eta_1 p}{p_1} + O\left(\beta N_n^2   \right).
$$ Thus,
$$\beta'(p_2-k+1)+f_2(y)\ge \beta'(p_2-k+1)\left(1+\frac{\eta_1}{\beta' p_1}+O\left(\frac{N_n^2}{p}\right)\right)$$ 
for $y\in D_0.$
For simplicity, set 
$$\zeta_1=\frac{\eta_1}{\beta' p_1}, \quad \zeta_2=\frac{2\eta_2 p}{\beta' p_1^2}, \quad l_n=\frac{\eta_1}{\beta' p_1}+O\left(\frac{N_n^2}{p}\right) \quad \text{and}\quad \kappa_n=\frac{N_n^2}{p}.$$
The joint probability density function of $c_k$ and $s_k$ is 
$$p_k(x, y):=\frac{1}{B_{s_k} B_{c_k}} x^{\beta'(n-k)-1}(1-x)^{\beta'(p-n-k+1)-1}y^{\beta'(p_1-k+1)-1}(1-y)^{\beta'(p_2-k+1)-1}.$$
Here, $\beta'=\beta/2$ and 
$$B_{s_k} := \frac{\Gamma(\beta^{\prime}\left(n-k\right))\Gamma(\beta^{\prime}\left(p-n-k+1\right))}{ \Gamma(\beta^{\prime}\left(p-2k+1 \right))}, \quad B_{c_k}= \frac{\Gamma(\beta^{\prime}\left(p_1-k+1\right))\Gamma(\beta^{\prime}\left(p_2-k+1\right))}{\Gamma(\beta^{\prime}\left(p-2k+2 \right))}.$$
Using the elementary inequality $\log(1-x)\le x$ and the above upper bounds on $f_1(x, y)$ and $f_2(y)$ for $(x, y)\in \Omega_0\times D_0,$ we get that 
$$\aligned p^{(k)}(x, y) e^{-x f_{1}(x, y)-y f_2(y)}&\le \exp\left\{-\beta'(p-n-k+1)(1+\zeta_1+\zeta_2 y+O(\kappa_n))x\right\} \\
&\quad\times x^{\beta'(n-k)-1} \exp\left\{-\beta'(p_2-k+1) (1+l_n) y\right\}y^{\beta'(p_1-k+1)}.\endaligned $$
Thus, it holds that 
\begin{align}
&\quad \mathbb E\left[  e^{-c_k f_2( c_k)-s_k f_1(s_k, c_k)} \mathbf 1_{\Omega_0\times D_0}\right]\notag\\
&=\frac{1}{B_{s_k} B_{c_k}}\int_{\Omega_0\times D_0} e^{-xf_1(x, y)-yf_2(y)} p^{(k)}(x, y) dx dy\notag\\
&\le \frac{1}{B_{s_k} B_{c_k}}\int_{0}^{+\infty} \exp\left\{-\beta'(p_2-k+1) (1+l_n) y\right\}y^{\beta'(p_1-k+1)} dy \notag\\
&\quad \left(\int_{0}^{\infty}  \exp\left\{-\beta'(p-n-k+1)(1+\zeta_1+\zeta_2 y+O(\kappa_n))x\right\} x^{\beta'(n-k)-1} dx \right) \notag \\
&\le \frac{\Gamma(\beta'(n-k))}{B_{s_k} B_{c_k}} (\beta'(p-n-k+1))^{-\beta'(n-k)} e^{-\beta'(n-k) (\zeta_1+O(\kappa_n))} \notag\\
&\quad \times \int_{0}^{+\infty} y^{\beta'(p_1-k+1)} \exp\left\{-\beta'(p_2-k+1) (1+l_n) y-\beta'(n-k)\zeta_2 y\right\} dy \notag \\
&\le \frac{\Gamma(\beta'(n-k))\Gamma(\beta'(p_1-k+1))}{B_{s_k} B_{c_k}}(\beta'(p-n-k+1))^{-\beta'(n-k)} e^{-\beta'(n-k) (\zeta_1+O(\kappa_n))} \notag \\
&\quad\times (\beta'(p_2-k+1))^{-\beta'(p_1-k+1)}\exp\left\{-\beta'(p_1-k+1)\left(l_n+\frac{(n-k)\zeta_2}{p_2-k+1}\right)\right\}. \label{generalinte}
	\end{align}
Here, for the second inequality and the third equality, we use the definition of Gamma functions relating to $dx$ and then $dy,$ respectively and for the last inequality we use an additional inequaltiy $\log(1+x)\le x.$ 

The Stirling formula implies that
\begin{equation}\label{expressforomega2} 
\aligned
&\quad \frac{\Gamma\left(\beta^{\prime}(p-2 k+2)\right) }{\Gamma\left(\beta^{\prime}\left(p_2-k+1\right)\right)} \times \left( \beta^{\prime}\left(p_2-k+1\right)\right)^{-\beta^{\prime}\left(p_1-k+1\right)} \\
&= \left(\beta^{\prime}(p-2 k+2)\right)^{\beta^{\prime}(p-2 k+2)} \left(\beta^{\prime}(p_2- k+1)\right)^{-\beta^{\prime}(p_2-k+1)}\times \exp\left\{-\beta'(p_1-k+1)+O\left( \frac{p_1}{p}\right)\right\} \\
&=\exp\left\{-\beta'(p-2k+2)\log\left(1-\frac{p_1-k+1}{p-2k+2}\right)-\beta'(p_1-k+1)+O\left( \frac{p_1}{p}\right)\right\}  \\
&=\exp\left\{O\left(\frac{\beta p_1^2}{p}\right)\right\} 
\endaligned 
\end{equation}
and similarly 
\begin{align}
\frac{\Gamma\left(\beta^{\prime}(p-2 k+1)\right)}{\Gamma\left(\beta^{\prime}(p-n-k+1)\right)}\left(\beta^{\prime}(p-n-k+1)\right)^{-\beta^{\prime}(n-k)}=\exp\left\{O\left(\frac{\beta n^2 }{p}\right)\right\}. \label{expressforomega}
\end{align}
Here, for the last equality in \eqref{expressforomega2} we use the Taylor's formula $\log(1+x)=x+O(x^2)$ for $|x|$ small enough.

Putting \eqref{expressforomega2} and \eqref{expressforomega} back into the expression \eqref{generalinte}, we have 
\begin{align*}
&\quad \mathbb E\left[  e^{-c_k f_2( c_k)-s_k f_1(s_k, c_k)} \mathbf 1_{\Omega_0\times D_0}\right]\notag\\
&\le \exp\left\{-\beta'(n-k) \zeta_1+O\left(\frac{\beta p_1^2} p\vee \beta n\kappa_n\right)-\beta'(p_1-k+1)\left(l_n+\frac{ (n-k)\zeta_2}{p_2-k+1}\right)\right\}\notag \\
&=	\exp\left\{-\frac{\eta_1(p_1+n-2k+1)}{p_1}-\frac{2\eta_2 p(n-k)(p_1-k+1)}{p_1^2(p_2-k+1)}+O\left(\frac{\beta p_1^2\vee \beta n N_n^2 \vee \beta p_1N^2_n}{p}\right)\right\}. 
	\end{align*}
Using the condition $\beta n p_1^2\ll p_2$ and $(\beta n)^3=o((\beta p_1)^2)$, we can write  
$$\aligned  \frac{2\eta_2 p(n-k)(p_1-k+1)}{p_1^2(p_2-k+1)}&=\frac{2\eta_2 (n-k)}{p_1}+O\left(\frac{\beta n^2}{p_1^2}\right)=\frac{2\eta_2 (n-k)}{p_1}+o\left(\frac1n\right);\\
O\left(\frac{\beta p_1^2\vee \beta n N_n^2 \vee \beta p_1N^2_n}{p}\right)&=O\left(\frac{\beta p_1^2}{p}\right)=o\left(\frac1n\right).\endaligned $$
Finally, we get that 
\begin{align}
&\quad \mathbb E\left[  e^{-c_k f_2( c_k)-s_k f_1(s_k, c_k)} \mathbf 1_{\Omega_0\times D_0}\right] \le \exp\left\{-\frac{\eta_1(p_1+n-2k+1)}{p_1}-\frac{2\eta_2 (n-k)}{p_1}+o\left(\frac{1}{n}\right)\right\}. \label{key00}
	\end{align}
 
Now we work on $\Omega_0\times D_j$ for $1\le j\le j_1+1.$ Simply, it is valid that   
\begin{align}
	& \mathbb E\left[  e^{ -c_k f_2( c_k)-s_k f_1(s_k, c_k)} \mathbf 1_{\Omega_0 \times D_j} \left(s_k, c_k\right)  \right]
	\leq  \sup_{y \in D_j}  e^{-y f_2( y)} \mathbb{E} \left[  e^{-s_kf_1(s_k, c_k)} \mathbf 1_{\Omega_0\times D_j} \left(s_k, c_k\right)  \right]. \label{omegaj}
\end{align}  
Similarly as for \eqref{generalinte}, we work on 
the lower bound of $f_1(x, y)+\beta'(p-n-k+1)$ for $(x, y)\in \Omega_0\times D_j.$
Indeed, for $y\in D_j,$ we throw away the two positive terms of $f_1(x, y)$ in \eqref{f1omega0} to get  
$$\beta'(p-n-k+1)+f_1(x, y)\ge \beta'(p-n-k+1)(1+O\left(p^{-1}\right)).$$
When $1\le j\le j_1,$ as \eqref{generalinte}, letting $p_{2}^{(k)}$ denote the density function of $c_k,$ it holds that 
\begin{align}
&\quad \mathbb E\left[  e^{-s_k f_1(s_k, c_k)} \mathbf 1_{\Omega_0\times D_j}(s_k, c_k)\right]\notag\\
&\le \frac{1}{B_{s_k}}\iint_{\Omega_0\times D_j}e^{-[\beta'(p-n-k+1)(1+O\left(\frac1p\right)]x} x^{\beta'(n-k)-1} p_{2}^{(k)}(y) dx dy \notag \\
&\le \frac{\Gamma(\beta'(n-k))}{B_{s_k} } \left(\beta'(p-n-k+1)\left(1+O\left(p^{-1}\right)\right)\right)^{-\beta'(n-k)} \mathbb{P}(c_k\in D_j) \notag \\
&\le \frac{ \Gamma(\beta^{\prime}\left(p-2k+1 \right))}{\Gamma(\beta^{\prime}\left(p-n-k+1\right))}(\beta'(p-n-k+1))^{-\beta'(n-k)} e^{-O(\beta'n/p) } \mathbb{P} (c_k\ge jN_n\mathbb{E}c_k)\notag \\
&=\exp\left\{O\left(\frac{\beta n^2}{p}\right)\right\}\mathbb{P} (c_k\ge jN_n\mathbb{E}c_k). \label{general0j} 
	\end{align} 
	
Here, for the last but second inequality we use the expression \eqref{expressforomega}.
Lemma \ref{betalem} ensures that
\begin{align} 
	 \mathbb P\left( c_k \geq jN_n \mathbb E c_k \right) 
	 \leq &  \mathbb P\left( \left|c_k-\mathbb{E} c_k\right| \geq \left(jN_n -1 \right) \mathbb E c_k \right) \notag  \\
	 \leq & 4 \exp \left\{ - \frac{\left(jN_n -1 \right)^2 \left(\mathbb E c_k\right)^2}{128}  \frac{\beta^{\prime}(p_2-k+1)^2 }{p_1-k+1} \right\} \notag \\
	 \leq & 4 \exp \left\{ - \frac{\beta^{\prime} p_1}{512}\left( 1+o\left( 1\right) \right) \left(j+1\right)^2 N_n^2 \right\} \label{convexforck}
\end{align}  
since $2(jN_n-1) \geq (j+1)N_n$ and $\mathbb E c_k = \frac{p_1}{p}+ O\left(\frac{n}{p} \right).$
Further, for $y\in D_j,$ we know that 
$$y f_2(y)\ge - \frac{2\eta_2 }{3} \frac{p^2}{p_1^2}y^2\ge \frac{2\eta_2 }{3} \frac{p^2}{p_1^2} (j+1)^2N_n^2(\mathbb{E}c_k)^2\ge -\eta_2 (j+1)^2 N_n^2.$$
Due to the facts $\beta n^2 /p\ll 1\ll p_1N_n^2$ and $(j+1)^2\ge 3j,$ it follows from \eqref{omegaj}, \eqref{general0j} and \eqref{convexforck} that \begin{align*}
&\quad \mathbb E\left[  e^{-c_k f_2(c_k)-s_k f_1(s_k, c_k)} \mathbf 1_{\Omega_0\times D_j}\right]\le \exp \left\{ - \frac{\beta' p_1 (j+1)^2 N_n^2}{600}\right\}\le \exp \left\{ - \frac{\beta p_1 j N_n^2}{512}\right\}.
	\end{align*} 
	This implies immediately that 
\begin{align}
&\quad \mathbb E\left[  e^{-c_k f_2(c_k)-s_k f_1(s_k, c_k)} \mathbf 1_{\Omega_0\times \cup_{j=1}^{j_1} D_j}\right]\le \sum_{j=1}^{\infty} \exp \left\{ - \frac{\beta p_1 j N_n^2}{600}\right\}\le \exp \left\{ - \frac{\beta p_1 N_n^2}{1024}\right\}. \label{key0j} 
	\end{align}  
Similarly as \eqref{general0j}, when $\left(s_k, c_k\right) \in \Omega_0 \times D_{j_1+1},$ we have
\begin{align}
	  & \quad \mathbb E\left[  e^{-c_kf_2( c_k)-s_kf_1(s_k, c_k)} \mathbf 1_{\Omega_0 \times D_{j_1+1}} \left(s_k, c_k\right)  \right] \notag\\
	 &\leq e^{\frac{2\eta_2 p^2}{3p_1^2}}\mathbb E\left[  e^{-s_kf_1(s_k, c_k)} \mathbf 1_{\Omega_0 \times D_{j_1+1}} \left(s_k, c_k\right)  \right]\notag\\
	 &\leq  \exp\left\{\frac{2\eta_2 p^2}{3p_1^2}+O\left(\frac{\beta n^2}{p}\right)\right\}\mathbb{P}\left(|c_k-\mathbb{E}c_k|\ge \frac14\right)\notag\\
	 &\leq \exp\left\{-\frac{\beta' p^2}{2200 p_1}\right\} \label{ckdj1}
\end{align} 
for $n$ large enough.
Here, for the last inequality we use Lemma \ref{betalem} again to lead 
$$\mathbb{P}\left(|c_k-\mathbb{E}c_k|\ge \frac14\right)\le 4\exp\left\{-\frac{\beta'(p_2-k+1)^2}{2048(p_1-k+1)}\right\}\le \exp\left\{-\frac{\beta'p^2}{2100 p_1}\right\}$$ 
and we also utilize the fact that $\frac{2\eta_2 p^2}{3p_1^2}+O\left(\frac{\beta n^2}{p}\right)=o\left(\frac{\beta p^2}{p_1}\right).$  
Putting \eqref{key00}, \eqref{key0j} and \eqref{ckdj1} together, and keeping in mind $\beta p_1 N_n^2=\frac{\beta p_1^2}{n}\ll \frac{\beta p^2}{p_1},$ we are able to write that 
\begin{align}
	 & \mathbb E\left[  e^{-f_2( c_k)-f_1(s_k, c_k)} \mathbf 1_{\Omega_0 \times [0, 1]} \left(s_k, c_k\right)  \right] \notag \\
	 \leq & \exp\left\{  -\frac{\eta_1 (p_1+n-2k+1)}{p_1} -\frac{ 2 \eta_2 (n-k) }{p_1} +o\left( \frac{1}{n}\right)  \right\}+2\exp\left\{-\frac{ \beta p_1 N_n^2 }{1024}\right\}. \label{omega0}
\end{align}
 
 Now we work on $ \left(s_k, c_k\right) \in \Omega_j \times [0,1]$
 with $1\le j\le j_0.$ It is easy to write that
 \begin{align*}
	 \mathbb E \left[e^{ -s_k f_1(s_k, c_k)-c_k f_2(c_k) } \mathbf 1_{\Omega_j\times [0, 1]}(s_k, c_k) \right] & \leq  \sup_{x \in \Omega_j} e^{-x f_1(x, y) }\mathbb E \left[e^{ -c_k f_2(c_k)} \mathbf 1_{\Omega_j\times [0, 1]}(s_k, c_k) \right]\\
	 & \leq \sup_{x \in \Omega_j} e^{-x f_1(x, y) }\mathbb E \left[e^{ -c_k f_2(c_k)} \right]\mathbb P\left( s_k \geq jM_n \mathbb E s_k \right).
\end{align*}  
First, it follows from the definition
 in \eqref{f2def} and $x\le M_n(j+1)\mathbb{E}s_k$ that \begin{align*}
	 \sup_{x \in \Omega_j} e^{-x f_1(x, y) }
		 \leq  & \exp\left\{  \frac{ \eta_1 p}{2p_1}\left(j+1\right)^2 M_n^2  \left(\mathbb E s_k\right)^2 + \frac{4\eta_2 p^2}{3p_1^2}\left(j+1\right)^3 M_n^3  \left(\mathbb E s_k\right)^3 \right\} \\
   = & \exp \left\{O\left( \frac{\beta n^2 }{p_1^2} \right) \left(j+1\right)^2 M_n^2  \right\},
\end{align*}  
where the last equality is due to  
$$ \aligned  \frac{\eta_1 p}{2p_1}\left(\mathbb E s_k\right)^2 + \frac{4\eta_2 p^2}{3p_1^2}\left(j+1\right) M_n  \left(\mathbb E s_k\right)^3
\le \frac{\eta_1 p n^2}{2p_1 p^2} + \frac{4\eta_2 n^2 p^2}{3p_1^2 p^2}\left(j_0+1\right) M_n \mathbb{E} s_k=O\left( \frac{\beta n^2 }{p_1^2} \right).
\endaligned $$
Second, via Lemma \ref{betalem}, similarly we have   
\begin{align}
	 \mathbb P\left( s_k \geq jM_n \mathbb E s_k \right) 
	 \leq &  \mathbb P\left( \left|s_k-\mathbb{E} s_k\right| \geq \left(jM_n -1 \right) \mathbb E s_k \right) \notag \\
	 \leq & 4 \exp \left\{ - \frac{\left(jM_n -1 \right)^2 \left(\mathbb E s_k\right)^2}{128}  \frac{\beta^{\prime}(p-n-k+1)^2 }{n-k} \right\} \notag\\
	 \leq & 4 \exp \left\{ - \frac{\beta^{\prime}\left(n-k\right)\left(j+1\right)^2 M_n^2 }{512} \right\}. \label{convexsk}
\end{align}  
Thereby, we have that 
\begin{align*}
	 \quad \mathbb E \left[e^{ -s_k f_1(s_k, c_k)-c_k f_2(c_k) } \mathbf 1_{\Omega_j\times [0, 1]}(s_k, c_k) \right] &\leq  \exp \left\{ - \frac{\beta^{\prime}\left(n-k\right)\left(j+1\right)^2 M_n^2 }{600} \right\}\mathbb E \left[e^{ -c_k f_2(c_k)}\right]\\
	 &\le \exp \left\{ - \frac{\beta j M_n^2 }{600} \right\}\mathbb E \left[e^{ -c_k f_2(c_k)}\right] .\end{align*}
Utilizing the fact $s_k\le 1,$ we find that
\begin{align*}
	 & \mathbb E \left[e^{ -s_k f_1(s_k, c_k)-c_k f_2(c_k) } \mathbf 1_{\Omega_{j_0+1}\times [0, 1]}(s_k, c_k) \right]\\
	   \leq &\exp\left\{\frac{\eta_1 p }{2p_1}+\frac{4\eta_2p^2}{p_1^2} \right\}\mathbb E \left[e^{ -c_k f_2(c_k)}\right] \mathbb{P}\left(|s_k-\mathbb E s_k| \geq \frac14  \right)\\
		\leq & 4 \exp \left\{\frac{\eta_1 p}{2 p_1} +  \frac{4\eta_2 p^2}{3 p_1^2} -  \frac{\beta^{\prime} p^2}{2048 n} \left(1+ o(1) \right) \right\} \\
	\le  &  \exp \left\{ - \frac{\beta p^2}{5000 n} \right\}.
\end{align*}  
Thus, we have
\begin{align*}
	\mathbb{E}\left[e^{-c_kf_2\left(c_k\right)-s_kf_1\left(s_k, c_k\right)} \mathbf{1}_{\cup_{j=1}^{j_0+1} \Omega_j \times [0,1]}\left(s_k, c_k\right)\right] 
	\leq \exp\left\{ -\frac{\beta p_1^2}{1024 n^2}\right\} \mathbb E^{c_k} \left[ e^{-f_2( c_k)} \right].
\end{align*}  
Next, we focus on the upper bound of $ \mathbb E^{c_k} \left[ e^{- f_2( c_k)} \right]$ for $1 \leq k \leq n.$ Recalling $$\beta'(p_2-k+1)+f_2(y)\ge \beta'(p_2-k+1)\left(1+\frac{\eta_1}{\beta' p_1}+O\left(\frac{N_n^2}{p}\right)\right)$$ 
for $y\in D_0.$ 
Similar argument as \eqref{generalinte} and \eqref{key00}, we have from the definition of $c_k$ and \eqref{expressforomega2} that 
\begin{align*}
	 \mathbb E^{c_k} \left[ e^{-c_kf_2( c_k)} \mathbf{1}_{D_0}\left(c_k\right) \right]
	 \le & \exp\left\{-\beta'(p_1-k+1)\left(\frac{\eta_1}{\beta' p_1}+O\left(\frac{N_n^2}{p}\right)\right)+O\left(\frac{\beta p_1^2}{p}\right)\right\} \\
	= & \exp\left\{-\frac{\eta_1(p_1-k+1)}{p_1}+ O\left(\frac{\beta p_1^2}{p}\vee \frac{\beta p_1 N_n^2}{p}\right) \right\}\\
	&=\exp\left\{-\frac{\eta_1(p_1-k+1)}{p_1}+ O\left(\frac{\beta p_1^2}{p}\right) \right\}.
\end{align*}   
Also, as \eqref{key0j}, it holds that 
\begin{align*}
	\mathbb{E}\left[e^{-c_kf_2\left(c_k\right)} \mathbf{1}_{\cup_{j=1}^{j_1} D_j}\left(c_k\right)\right]
	\leq & \sum_{j=1}^{j_1} \sup _{y \in D_j} e^{-y f_2(y)} \mathbb{P}\left(c_k \geq j N_n \mathbb{E} c_k\right) \\
	\leq & \sum_{j=1}^{j_1} \exp \left\{\left(\eta_2-\frac{\beta^{\prime} p_1}{512} (1+o(1))\right)(j+1)^2 N_n^2\right\} \\
 	\leq & \exp \left\{-\frac{\beta p_1 N_n^2}{1024}\right\}
\end{align*}  
and similarly
\begin{align*}
	\mathbb{E}\left[e^{-c_kf_2\left(c_k\right)} \mathbf{1}_{ D_{j_1+1}}\left(c_k\right)\right] 
\leq \exp\left\{ - \frac{\beta p_2^2}{2048 p_1}  \right\} .
\end{align*}  
Combining the upper bounds provided above, we claim that 
\begin{align*}
	\mathbb E \left[ e^{-c_kf_2( c_k)} \right]\leq \exp\left\{-\frac{\eta_1(p_1-k+1)}{p_1}+ O\left(\frac{\beta p_1^2}{p}\right) \right\}+2 \exp \left\{-\frac{\beta p_1 N_n^2}{1024}\right\}
\end{align*}  
for all $1\le k\le n$ 
and therefore for $1 \leq k \leq n-1,$
\begin{align*}
	&\quad \mathbb{E}\left[e^{-c_kf_2\left(c_k\right)-s_kf_1\left(s_k, c_k\right)}\right]\\
	 &=\mathbb{E}\left[e^{- c_kf_2\left(c_k\right)-s_kf_1\left(s_k, c_k\right)} \mathbf{1}_{\cup_{j=1}^{j_0+1} \Omega_j \times [0,1]}\left(s_k, c_k\right)\right]+\mathbb{E}\left[e^{- c_kf_2\left(c_k\right)-s_kf_1\left(s_k, c_k\right)} \mathbf{1}_{\Omega_0 \times [0,1]}\left(s_k, c_k\right)\right]  \\
	& \le \exp\left\{  -\frac{\eta_1 (p_1+n-2k+1)}{p_1} -\frac{ 2 \eta_2 (n-k) }{p_1} +o\left( \frac{1}{n}\right)  \right\}+4\exp\left\{-\frac{ \beta p_1 N_n^2 }{1024}\right\}.
\end{align*}  
Substituting the upper bounds of $\mathbb{E}\left[e^{- c_kf_2\left(c_k\right)-s_kf_1\left(s_k, c_k\right)}\right]$ and $\mathbb E \left[ e^{ -f_2( c_k)} \right]$ into the inequality \eqref{fsckk} yields
\begin{align*}
	&\quad  \mathbb{E}\left[\exp \left\{-\eta_1 \sum_{i=1}^n\left(\frac{p \lambda_i-p_1}{p_1}\right)-\eta_2 \sum_{i=1}^n\left(\frac{p \lambda_i-p_1}{p_1}\right)^2\right\}\right] \\
	&\leq  \prod_{k=1}^{n-1}\bigg(\exp\left\{-\frac{\eta_1 (n-2k+1)}{p_1} -\frac{ 2 \eta_2 (n-k) }{p_1} +o\left( \frac{1}{n}\right)  \right\}+4\exp\left\{-\frac{ \beta p_1 N_n^2 }{1024}\right\}\bigg) \\
	&\quad \times \left(\exp\left\{\frac{\eta_1(n-1)}{p_1}+ O\left(\frac{N_n^2}{p}\right) \right\}+2 \exp \left\{-\frac{\beta p_1 N_n^2}{1024}\right\}\right) \\
	&= \prod_{k=1}^{n-1}\exp\left\{  -\frac{\eta_1 (n-2k+1)}{p_1} -\frac{ 2 \eta_2 (n-k) }{p_1} +o\left( \frac{1}{n}\right)  \right\}\left(1+\exp\left\{-O^+(\beta p_1 N_n^2) \right\}\right)\\
	&\quad \times \exp\left\{\frac{\eta_1(n-1)}{p_1}+ O\left(\frac{N_n^2}{p}\right) \right\}\left(1+\exp \left\{-O^{+}(\beta p_1 N_n^2)\right\}\right)\\
	&= \exp\left\{ -\frac{\eta_2 n^2}{p_1}  +o(1)\right\}\left(1+\exp \left\{-O^{+}(\beta p_1 N_n^2)\right\}\right).
		\end{align*}  
Here, for the second equality we use the fact that 
$$\beta p_1 N^2_n=\frac{\beta p_1^2}{n}>\!> \frac{\beta n}{p_1}.$$  
Therefore, it follows that \begin{align*}
	 \varlimsup_{n\to \infty}  \mathbb E\left[ \exp\left\{ -\eta_1 \sum_{i=1}^n\left( \frac{p\lambda_i-p_1}{p_1} \right) - \eta_2  \sum_{i=1}^n\left( \frac{p\lambda_i-p_1}{p_1} \right)^2  + \frac{\eta_2 n^2}{p_1} \right\} \right] 
	\leq & 1.	
\end{align*}  
On the other hand,  Jessen's inequality and Lemma \ref{betalem} implies the inverse direction 
\begin{align*}
	\varliminf_{n\to \infty}  \mathbb E\left[ \exp\left\{ -\eta_1 \sum_{i=1}^n\left( \frac{p\lambda_i-p_1}{p_1} \right) - \eta_2  \sum_{i=1}^n\left( \frac{p\lambda_i-p_1}{p_1} \right)^2  + \frac{\eta_2 n^2}{p_1} \right\} \right]
	\geq & 1.	
\end{align*}  
This finishes the proof of \eqref{etalimit}.

{\bf Proof of \eqref{alphalimit}}. It remains the limiting behavior of the following expectation 
\begin{align*}
	 \mathbb E\left[ \exp\left\{ -\alpha   \sum_{i=1}^n\frac{p\lambda_i}{p_1}\right\}\right], 
\end{align*} 
where $\alpha=O\left(\frac{\beta}{\delta_n}\right)$ with $\delta_n$ satisfying 
$\sqrt{\frac{n}{p_1}}\ll \delta_n\ll 1. $   
Setting 
\begin{align}
	f_1(x, y) : = \frac{\alpha p}{p_1}  \left( 1- y \right)
			    -  \frac{\alpha p}{2 p_1}x,   	 
			   \quad f_2( y) : = \frac{\alpha p}{p_1} -  \frac{\alpha  p}{2 p_1} y, \label{ftildedef}
\end{align}  
via the same analysis we have the following upper bound
\begin{align}
	 \mathbb E\left[ \exp\left\{-\frac{\alpha p}{p_1}\sum_{i=1}^n\lambda_i\right\}\right]  
	\leq &   \prod_{k=1}^n   \mathbb E\left[  e^{-c_k f_2( c_k)- s_k f_1(s_k, c_k) } \right] \label{tildefsckk}
\end{align}  
with $ f_1(s_n, c_n) =0.$ The argument will completely resemble the one for the second limit and we offer a sketch of the proof while keep a tense attention to the changes of the estimates of $f_1$ and $f_2$ while $\alpha$ replaces $\eta_1$ with different order and $\eta_2$ is replaced by $0.$ 

For $(x, y)\in \Omega_0\times D_0,$ replacing $\eta_2=0$ and paying attention to the order of $\alpha,$ it holds  that 
$$\aligned \beta'(p-n-k+1)+f_1(x, y)& \ge \beta'(p-n-k+1)\left(1+\frac{\alpha}{\beta' p_1}+O\left(\frac{N_n}{p\delta_n}\right)\right).
\endaligned $$
Similarly, 
\begin{align}\beta'(p_2-k+1)+f_2(y)\ge \beta'(p_2-k+1)\left(1+\frac{\alpha}{\beta' p_1}+O\left(\frac{N_n}{p\delta_n}\right)\right) 
\label{tildef20}\end{align} 
for $y\in D_0.$
Plugging all these expressions into \eqref{generalinte}, we have that 
\begin{align*}
\quad \mathbb E\left[  e^{-c_k f_2( c_k)-s_k f_1(s_k, c_k)} \mathbf 1_{\Omega_0\times D_0}\right]\le \exp\left\{-\frac{\alpha(p_1+n-2k+1)}{p_1}+O\left(\frac{\beta p_1^2}{p}\vee\frac{\beta p_1 N_n}{p\delta_n}\right)\right\}.
	\end{align*}
	As we compare the two terms in the $O,$ we see from the restriction $\delta_n^2\gg n/p_1$ that $$p_1^2\delta_n^2>\!>n p_1>\!>\frac{p_1}{n}=N_n^2.$$
	Then $O\left(\frac{\beta p_1^2}{p}\vee\frac{\beta p_1 N_n}{p\delta_n}\right)=O\left(\frac{\beta p_1^2}{p}\right)=o\left(\frac1n\right).$
	Thus, 
\begin{align}
\quad \mathbb E\left[  e^{-c_k f_2( c_k)-s_k f_1(s_k, c_k)} \mathbf 1_{\Omega_0\times D_0}\right]\le \exp\left\{-\frac{\alpha(p_1+n-2k+1)}{p_1}+o\left(\frac1n\right)\right\}. \label{key00tilde}
	\end{align}	
When $(x, y)\in \Omega_0\times D_j$ for $1\le j\le j_1+1,$
$$\beta'(p-n-k+1)+f_1(x, y)\ge \beta'(p-n-k+1)\left(1+O\left(\frac{1}{p\delta_n}\right)\right)$$
and $y f_2(y)\ge 0$
for $y\in D_j.$ 
When $1\le j\le j_1,$ the same analysis as \eqref{general0j} tells that 
\begin{align}
\quad \mathbb E\left[  e^{-c_kf_2(c_k)-s_k f_1(s_k, c_k)} \mathbf 1_{\Omega_0\times D_j}\right] &\le \mathbb E\left[  e^{-s_k f_1(s_k, c_k)} \mathbf 1_{\Omega_0\times D_j}\right] \notag\\
&\le 
\exp\left\{O\left(\frac{\beta n^2}{p}\vee \frac{\beta n}{p_1\delta_n}\right)\right\}\mathbb{P} (c_k\ge jN_n\mathbb{E}c_k)\notag\\
&\le 4 \exp \left\{ - \frac{\beta^{\prime} p_1}{512}\left( 1+o\left( 1\right) \right) \left(j+1\right)^2 N_n^2+O\left(\frac{\beta n^2}{p}\vee \frac{\beta n}{p_1\delta_n}\right) \right\}. \label{general0jtilde} 
	\end{align} 
Since $\frac{\beta n^2}{p}\vee \frac{\beta n}{p_1\delta_n}\ll \beta p_1N_n^2$ and $(j+1)^2\ge 3j,$ it is true that  \begin{align*}
&\quad \mathbb E\left[  e^{-c_k f_2(c_k)-s_k f_1(s_k, c_k)} \mathbf 1_{\Omega_0\times D_j}\right]\le \exp \left\{ - \frac{\beta p_1 j N_n^2}{512}\right\}.
	\end{align*} 
	Likewise, this implies immediately that 
\begin{align}
&\quad \mathbb E\left[  e^{-c_k f_2(c_k)-s_k f_1(s_k, c_k)} \mathbf 1_{\Omega_0\times \cup_{j=1}^{j_1} D_j}\right]\le \exp \left\{ - \frac{\beta p_1 N_n^2}{1024}\right\}. \label{key0jtilde} 
	\end{align}  
Similarly, we have
\begin{align}
	  \quad \mathbb E\left[  e^{-f_2( c_k)-f_1(s_k, c_k)} \mathbf 1_{\Omega_0 \times D_{j_1+1}} \left(s_k, c_k\right)  \right]
	 &\leq  \exp\left\{O\left(\frac{\beta n^2}{p}\vee \frac{\beta n}{p_1\delta_n}\right)\right\}\mathbb{P}\left(|c_k-\mathbb{E}c_k|\ge \frac14\right)\notag\\
	 &\leq \exp\left\{-\frac{\beta' p^2}{2200 p_1}\right\}. \label{ckdj1tilde} 
\end{align} 
Putting \eqref{key00tilde}, \eqref{key0jtilde} and \eqref{ckdj1tilde} together, we are able to write that 
\begin{align}
	 & \mathbb E\left[  e^{-f_2( c_k)-f_1(s_k, c_k)} \mathbf 1_{\Omega_0 \times [0, 1]} \left(s_k, c_k\right)  \right] \notag \\
	 \leq & \exp\left\{  -\frac{\alpha (p_1+n-2k+1)}{p_1}+o\left( \frac{1}{n}\right)  \right\}+2\exp\left\{-\frac{ \beta p_1 N_n^2 }{1024}\right\}. \label{omega0}
\end{align}
 Meanwhile, we can have 
 \begin{align*}
	 \sup_{x \in \Omega_j} e^{-x f_1(x, y) }
		 \leq  & \exp\left\{  \frac{ \alpha p}{2p_1}\left(j+1\right)^2 M_n^2  \left(\mathbb E s_k\right)^2 \right\} \\
   = & \exp \left\{O\left( \frac{\beta n^2 }{p_1 p\delta_n} \right) \left(j+1\right)^2 M_n^2  \right\}
\end{align*}   
Thereby, together with the concentration related to $s_k,$ we have that 
\begin{align*}
	 \quad \mathbb E \left[e^{ -s_k f_1(s_k, c_k)-c_k f_2(c_k) } \mathbf 1_{\Omega_j\times [0, 1]}(s_k, c_k) \right] &\leq  \exp \left\{ - \frac{\beta^{\prime}\left(n-k\right)\left(j+1\right)^2 M_n^2 }{600} \right\}\mathbb E \left[e^{ -c_k f_2(c_k)}\right]\\
	 &\le \exp \left\{ - \frac{\beta j M_n^2 }{600} \right\}\mathbb E \left[e^{ -c_k f_2(c_k)}\right] .\end{align*}
and  similarly 
\begin{align*}
	 & \mathbb E \left[e^{ -s_k f_1(s_k, c_k)-c_k f_2(c_k) } \mathbf 1_{\Omega_{j_0+1}\times [0, 1]}(s_k, c_k) \right]\\
		\leq & 4 \exp \left\{\frac{\alpha p}{2 p_1} -  \frac{\beta^{\prime} p^2}{2048 n} \left(1+ o(1) \right) \right\} \\
	\le  &  \exp \left\{ - \frac{\beta p^2}{5000 n} \right\}.
\end{align*}  
This is because $\frac{\alpha p}{p_1}=O\left(\frac{\beta p}{p_1\delta_n}\right)=o\left(\frac{\beta p^2}{n}\right).$
Thus, we have
\begin{align*}
	\mathbb{E}\left[e^{-c_kf_2\left(c_k\right)-s_kf_1\left(s_k, c_k\right)} \mathbf{1}_{\cup_{j=1}^{j_0+1} \Omega_j \times [0,1]}\left(s_k, c_k\right)\right] 
	\leq \exp\left\{ -\frac{\beta p_1^2}{1024 n^2}\right\} \mathbb E^{c_k} \left[ e^{-f_2( c_k)} \right].
\end{align*}  
Remembering  \eqref{f2def}, we have  
\begin{align*}
	 \mathbb E \left[ e^{-c_k f_2( c_k)} \mathbf{1}_{D_0}\left(c_k\right) \right]
	 \le & \exp\left\{-\beta'(p_1-k+1)\left(\frac{\alpha}{\beta' p_1}+O\left(\frac{N_n}{p\delta_n}\right)\right)+O\left(\frac{\beta p_1^2}{p}\right)\right\} \\
	= & \exp\left\{-\frac{\alpha(p_1-k+1)}{p_1}+ O\left(\frac{\beta p_1^2}{p}\vee \frac{\beta p_1 N_n}{p\delta_n}\right) \right\}\\
	=&\exp\left\{-\frac{\alpha(p_1-k+1)}{p_1}+ o\left(\frac{1}{n}\right) \right\}.
\end{align*} 
Here, we use the fact that 
$$\frac{N_n^2}{p_1^2\delta_n^2}=\frac{1}{n p_1\delta_n^2}\ll\frac{1}{n^2}\ll 1,$$
which makes $O\left(\frac{\beta p_1^2}{p}\vee \frac{\beta p_1 N_n}{p\delta_n}\right)=O\left(\frac{\beta p_1^2}{p}\right)=o\left(\frac1n\right).$   
Also, since $yf_2(y)\ge 0$ on $D_j$ when $1\le j\le j_1+1$, it holds that 
\begin{align*}
	\mathbb{E}\left[e^{-c_kf_2\left(c_k\right)} \mathbf{1}_{\cup_{j=1}^{j_1} D_j}\left(c_k\right)\right]
	\leq  \sum_{j=1}^{j_1} \mathbb{P}\left(c_k \geq j N_n \mathbb{E} c_k\right)
 	\leq \exp \left\{-\frac{\beta p_1 N_n^2}{1024}\right\}
\end{align*}  
and similarly
\begin{align*}
	\mathbb{E}\left[e^{-f_2\left(c_k\right)} \mathbf{1}_{ D_{j_1+1}}\left(c_k\right)\right] 
\leq \exp\left\{ - \frac{\beta p_2^2}{2048 p_1}  \right\} .
\end{align*}  
Combining the upper bounds provided above, we can tell that  
\begin{align*}
	\mathbb E \left[ e^{-c_k f_2( c_k)} \right]\leq \exp\left\{-\frac{\alpha(p_1-k+1)}{p_1}+ o\left(\frac{1}{n}\right) \right\}+2 \exp \left\{-\frac{\beta p_1 N_n^2}{1024}\right\}
\end{align*}  
for all $1\le k\le n$ 
and therefore for $1 \leq k \leq n-1,$
\begin{align*}
	&\quad \mathbb{E}\left[e^{-c_kf_2\left(c_k\right)-s_kf_1\left(s_k, c_k\right)}\right]\\
	 &=\mathbb{E}\left[e^{-c_kf_2\left(c_k\right)-s_kf_1\left(s_k, c_k\right)} \mathbf{1}_{\cup_{j=1}^{j_0+1} \Omega_j \times [0,1]}\left(s_k, c_k\right)\right]+\mathbb{E}\left[e^{-c_kf_2\left(c_k\right)-s_kf_1\left(s_k, c_k\right)} \mathbf{1}_{\Omega_0 \times [0,1]}\left(s_k, c_k\right)\right]  \\
	& \le \exp\left\{  -\frac{\alpha (p_1+n-2k+1)}{p_1} +o\left( \frac{1}{n}\right)  \right\}+4\exp\left\{-\frac{ \beta p_1 N_n^2 }{1024}\right\}.
\end{align*}  
Substituting the upper bounds of $\mathbb{E}\left[e^{-c_kf_2\left(c_k\right)-s_kf_1\left(s_k, c_k\right)} \right]$ and $\mathbb E^{c_k} \left[ e^{ -c_k'f_2( c_k)} \right]$ into the inequality \eqref{tildefsckk} yields
\begin{align*}
	& \mathbb{E}\left[\exp \left\{-\alpha \sum_{i=1}^n\left(\frac{p \lambda_i-p_1}{p_1}\right)\right\}\right] \\
	\leq & \prod_{k=1}^{n-1}\bigg(\exp\left\{-\frac{\alpha (n-2k+1)}{p_1} +o\left( \frac{1}{n}\right)  \right\}+4\exp\left\{-\frac{ \beta p_1 N_n^2 }{1024}\right\}\bigg) \\
	&\quad \times \left(\exp\left\{\frac{\alpha(n-1)}{p_1}+ o\left(\frac{1}{n}\right) \right\}+4\exp\left\{-\frac{ \beta p_1 N_n^2 }{1024}\right\}\right)\\
	&= 1+\exp \left\{-O^{+}(\beta p_1 N_n^2)\right\}.
		\end{align*}    
Therefore, together with the Jessen inequality and Lemma \ref{betalem} again, we obtain the desired result that  
\begin{align*}
	\lim_{n\to \infty}  \mathbb E\left[ \exp\left\{ -\alpha \sum_{i=1}^n\left( \frac{p\lambda_i-p_1}{p_1} \right)\right\} \right]
	= 1.	
\end{align*} 
The whole proof of \eqref{etalimit} is then completed.

{\bf Proof of \eqref{lowerforE}}. Using \eqref{etalimit}, we know $$\mathbb E\left[ \exp\left\{ -\eta_1  \sum_{i=1}^n\left( \frac{p\lambda_i-p_1}{p_1} \right) - \eta_2  \sum_{i=1}^n\left( \frac{p\lambda_i-p_1}{p_1} \right)^2  \right\} \right] = \exp \left\{-\frac{\eta_2 n^2}{p_1}+o\left(1\right) \right\}.$$  For the lower bound of 
$$\mathbb E\left[ \exp\left\{ -\eta_1  \sum_{i=1}^n\left( \frac{p\lambda_i-p_1}{p_1} \right) - \eta_2  \sum_{i=1}^n\left( \frac{p\lambda_i-p_1}{p_1} \right)^2  \right\} \mathbf 1_{\lambda_{(1)}>\frac{p_1(1-\delta_n)}{p}, \lambda_{(n)}<\frac{p_1(1-\delta_n)}{p}}\right],$$  
next we are going to show
\begin{align}
	 \mathbb E\left[ \exp\left\{ -\eta_1  \sum_{i=1}^n\left( \frac{p\lambda_i-p_1}{p_1} \right) - \eta_2  \sum_{i=1}^n\left( \frac{p\lambda_i-p_1}{p_1} \right)^2  \right\}\mathbf 1_{ \left\{ \lambda_{(n)} \ge \frac{p_1}{p}(1+\delta_n )\right\} } \right] 
	\le &\exp \left\{-\frac{\beta p_1 \delta_n^2}{4}(1+o(1))\right\}; \label{exp2n1n}\\
	 \mathbb E\left[ \exp\left\{ -\eta_1  \sum_{i=1}^n\left( \frac{p\lambda_i-p_1}{p_1} \right) - \eta_2  \sum_{i=1}^n\left( \frac{p\lambda_i-p_1}{p_1} \right)^2  \right\}\mathbf 1_{ \left\{ \lambda_{(1)} \le \frac{p_1}{p}(1-\delta_n )\right\} } \right] 
	\le  &\exp \left\{-\frac{\beta p_1 \delta_n^2}{4}(1+o(1))\right\}. \label{exp211n}
\end{align} 
Once these two inequalities hold, we see that 
\begin{align*}
	 &\quad \mathbb E\left[ \exp\left\{ -\eta_1  \sum_{i=1}^n\left( \frac{p\lambda_i-p_1}{p_1} \right) - \eta_2  \sum_{i=1}^n\left( \frac{p\lambda_i-p_1}{p_1} \right)^2  \right\} \mathbf 1_{\lambda_{(1)}>\frac{p_1(1-\delta_n)}{p}, \lambda_{(n)}<\frac{p_1(1-\delta_n)}{p}}\right]
\\	
	&=\mathbb E\left[ \exp\left\{ -\eta_1  \sum_{i=1}^n\left( \frac{p\lambda_i-p_1}{p_1} \right) - \eta_2  \sum_{i=1}^n\left( \frac{p\lambda_i-p_1}{p_1} \right)^2  \right\}  \right]\\
	&\quad-\mathbb E\left[ \exp\left\{ -\eta_1  \sum_{i=1}^n\left( \frac{p\lambda_i-p_1}{p_1} \right) - \eta_2  \sum_{i=1}^n\left( \frac{p\lambda_i-p_1}{p_1} \right)^2  \right\}\mathbf 1_{ \left\{ \lambda_{(1)} \le \frac{p_1}{p}(1-\delta_n )\right\} } \right] \\
	&\quad-\mathbb E\left[ \exp\left\{ -\eta_1  \sum_{i=1}^n\left( \frac{p\lambda_i-p_1}{p_1} \right) - \eta_2  \sum_{i=1}^n\left( \frac{p\lambda_i-p_1}{p_1} \right)^2  \right\}\mathbf 1_{ \left\{ \lambda_{(n)} \ge \frac{p_1}{p}(1+\delta_n )\right\} } \right] \\
	&\ge \exp \left\{-\frac{\eta_2 n^2}{p_1}+o\left(1\right) \right\}-2\exp \left\{-\frac{\beta p_1 \delta_n^2}{4}(1+o(1))\right\}. \label{exp211n}
\end{align*} 
The fact $\beta n^2/p_1\ll\beta p_1\delta_n^2$ entails the lower bound in \eqref{lowerforE}. 

We first prove the inequality \eqref{exp2n1n}. Note that on the set 
$\{\lambda_{(n)} > \frac{p_1}{p}(1+\delta_n )\},$  $\frac{p  \lambda_{(n)}-p_1}{p_1}>0,$ which implies with $\eta_1>0$ and $\eta_2>0$ that 
 $$ \exp\left\{ -\eta_1\left( \frac{p  \lambda_{(n)}-p_1}{p_1} \right) - \eta_2\sum_{i=1}^n\left( \frac{p  \lambda_{i}-p_1}{p_1} \right)^2  \right\}  \mathbf 1_{ \left\{ \lambda_{(n)} > \frac{p_1}{p}(1+\delta_n )\right\} } \le 1.$$
 Thus, utilizing the decomposition \eqref{orderden} and applying the inequality \eqref{upxn1xi} to see 
\begin{align*}
	&J_{1, n}:= \mathbb E\left[\exp\left\{ -\eta_1  \sum_{i=1}^n\left( \frac{p\lambda_i-p_1}{p_1} \right) - \eta_2  \sum_{i=1}^n\left( \frac{p\lambda_i-p_1}{p_1} \right)^2  \right\} \mathbf 1_{ \left\{ \lambda_{(n)} > \frac{p_1}{p}(1+\delta_n )\right\} } \right] \\
	\leq & \mathbb E\left[ \exp\left\{ -\eta_1  \sum_{i=1}^{n-1}\left( \frac{p\lambda_{(i)}-p_1}{p_1} \right) \right\}  \mathbf 1_{ \left\{ \lambda_{(n)} > \frac{p_1}{p}(1+\delta_n )\right\} } \right] \\
	\leq &  n A_n^{p_1, p_2} \left(\frac{p_1\delta_n}{p}\right)^{\beta(n-1)} \int_{\frac{p_1}{p}(1+\delta_n )}^1 \exp\left\{\frac{\beta (n-1) p}{p_1\delta_n} \left(x_{n}-\frac{p_1(1+\delta_n)}{p}\right)\right\} u_n(x_n) \mathrm{~d} x_n \\
	& \times \int  \exp\left\{ - \left( \eta_1 + \frac{\beta}{\delta_n} \right)  \sum_{i=1}^{n-1}\left( \frac{p x_i-p_1}{p_1} \right) \right\}\mathrm{~d} G_{n-1}^{p_1-1, p_2-1}.
\end{align*}  
Using Lemma \ref{estintpxd} and \eqref{alphalimit}, we have
\begin{align*}
	& \int_{\frac{p_1}{p}(1+\delta_n )}^1 \exp\left\{\frac{\beta (n-1) p}{p_1\delta_n} \left(x_{n}-\frac{p_1(1+\delta_n)}{p}\right)\right\} u_n(x_n) \mathrm{~d} x_n  \le \frac{2+o(1)}{\beta p \delta_n} u_n\left(\frac{p_1(1+\delta_n)}{p}\right)
\end{align*}  
and 
$$ \int  \exp\left\{ - \left( \eta_1 + \frac{\beta}{\delta_n} \right)  \sum_{i=1}^{n-1}\left( \frac{p x_i-p_1}{p_1} \right) \right\}\mathrm{~d} G_{n-1}^{p_1-1, p_2-1}=1+o(1)$$ 
for $n$ sufficiently large. 
Thus, it follows that 
\begin{align*}
	\log {\rm J}_{1, n}&\le \log \frac{A_{n}^{p_1, p_2}}{\beta p\delta_n}+\beta (n-1)\log\frac{p_1\delta_n}{p}+\log u_n\left(\frac{p_1(1+\delta_n)}{p}\right)+o(\beta n).
\end{align*}
Lemma \ref{Ann} and similar calculus as for \eqref{logI3nn} lead
\begin{align*}
	\log {\rm J}_{1, n}\le -\frac{\beta p_1 }{4} \delta_n^2 +o\left(\beta p_1\delta_n^2\right).
\end{align*}

Next, we prove the equality \eqref{exp211n}.  By the same argument as above, we have
\begin{align*}
	{\rm J}_{2, n}:=&\mathbb E\left[ \exp\left\{ -\eta_1\sum_{i=1}^n\left( \frac{p\lambda_i-p_1}{p_1} \right) - \eta_2  \sum_{i=1}^n\left( \frac{p\lambda_i-p_1}{p_1} \right)^2  \right\} \mathbf 1_{ \left\{ \lambda_{(1)}< \frac{p_1}{p}(1-\delta_n )\right\} } \right] \\
	\leq 	&  \mathbb E\left[ \exp\left\{ -\eta_1 \sum_{i=1}^n\left( \frac{p\lambda_{i}-p_1}{p_1} \right)\right\}\mathbf 1_{ \left\{ \lambda_{(1)}< \frac{p_1}{p}(1-\delta_n )\right\} } \right]. \end{align*} 
Similarly as for \eqref{upxn1xi}, one gets 
$$\prod_{i=2}^n (x_i-x_1)\le \left(\frac{p_1\delta_n}{p}\right)^{n-1}\exp\left\{\sum_{i=2}^n\frac{px_i-p_1}{p_1\delta_n}-\frac{px_1-p_1(1-\delta_n)}{p_1\delta_n}\right\}.$$	
This, together with the decomposition \eqref{orderden}, enables us to write 
\begin{align*}	 
	{\rm J}_{2, n}\leq &  n A_n^{p_1, p_2} e^{\eta_1\delta_n} \left(\frac{p_1}{p}\delta_n\right)^{\beta(n-1)} 
	\int  \exp\left\{ - \left( \eta_1 - \frac{\beta}{\delta_n} \right)  \sum_{j=2}^{n}\left( \frac{p x_i-p_1}{p_1} \right)\right\}\mathrm{~d} G_{n-1}^{p_1-1, p_2-1}\\
	& \times \int_0^{\frac{p_1}{p}(1 - \delta_n )} \exp\left\{-\frac{p(\beta (n-1) +\eta_1)}{p_1\delta_n}\left(x_{1}-\frac{p_1(1 -\delta_n )}{p} \right)\right\} u_n(x_1) \mathrm{~d} x_1 .
\end{align*}
The same applications of Lemma \ref{estintpxd}, Lemma \ref{lemexp1} and then Lemma \ref{Ann} deduce that 
$$
	\log {\rm J}_{2, n}
	\leq  -\frac{\beta p_1 \delta_n^2}{4}(1+o\left(1\right)).
$$
The proof of the inequality \eqref{exp211n} is then completed.
\end{proof}


\end{document}